\documentclass{article}
\usepackage{color}
\usepackage{amsmath}
\usepackage{graphicx}
\usepackage{footnote}
\usepackage{amsfonts,color}
\usepackage{amsmath,amssymb}
\usepackage{mathtools}
\usepackage{epsfig}
\usepackage{amssymb} 
\usepackage{mathtools}
\usepackage{amsthm,wasysym}
\usepackage[english]{babel}
\usepackage{cancel}
\usepackage{colonequals}
\makeatletter
\usepackage{float}
\usepackage{wrapfig}
\usepackage{nicefrac,enumitem}
\usepackage{agn_tikz}
\usepackage{hyperref}
\usepackage{fixltx2e}
\restylefloat{figure}

\makeatletter
\let\@fnsymbol\@arabic
\makeatother
\usepackage{caption}
\usepackage{float}

\usepackage{bbm}
\newcommand{\id}{{\boldsymbol{\mathbbm{1}}}}

\newcommand{\tr}{{\rm tr}}

\newcommand{\norm}[1]{\|#1\|}

\setcounter{tocdepth}{2} 

\def\dd{\displaystyle}

\setlength{\textheight}{23cm}
\setlength{\textwidth}{17cm}
\setlength{\topmargin}{-1cm}
\setlength{\oddsidemargin}{-0cm}
\setlength{\evensidemargin}{-1cm}

\setlength\arraycolsep{2pt}

\newtheorem{theorem}{Theorem}[section]

\newtheorem{proposition}[theorem]{Proposition}
\newtheorem{corollary}[theorem]{Corollary}

\newtheorem{definition}[theorem]{Definition}

\newcommand{\todo}[1][]{\textcolor{red}{TODO\ifx&#1&\else: #1\fi}}

\def\dd{\displaystyle}


\usepackage{multicol}
\usepackage[font=small]{caption}
\newcommand{\citet}[2][]{\citeauthor{#2} \cite[#1]{#2}}
%
%

%
%
%
%

%
%


\RequirePackage{amsmath}	
\RequirePackage{mathtools}	
%
%
%
%
%

%
%
%

\newcommand{\col}{\colon}

%
%
%

\newcommand{\R}{\mathbb{R}}

\DeclareMathOperator{\GL}{GL}

\DeclareMathOperator{\SO}{SO}
\DeclareMathOperator{\OO}{O}

\DeclareMathOperator{\Sym}{Sym}

%
%
%

\newcommand{\GLp}{\GL^{\!+}}

\newcommand{\On}{\OO(3)}

\newcommand{\Symn}{\Sym(3)}

%
%
%

\DeclareMathOperator{\diag}{diag}

\DeclareMathOperator{\Cof}{Cof}

%
%
%

\newcommand{\pdd}[3][]{\frac{\partial\ifx&#1&\else^{#1}\fi #2}{\partial #3}}

\DeclareMathOperator{\@macros@div}{div}
\renewcommand{\div}{\@macros@div}

%
%
%

%
%
%

\newcommand{\innerproduct}[1]{\langle #1 \rangle}

\newcommand{\iprod}{\innerproduct}

%
%
%

%
%
%

%
%
%

%
%
%

%
%
%

%
%

%
%

%
%
%
\providecommand{\availableaturl}[2][]{%
	available at \url{#2}%
}

%
%
%
%
%




%

%

%



%
%


%
%
%
%
%
%
%
%


%
%
%

%
%
%

%
%
%

%
%
%

%
%
%
\usepackage[babel,german=quotes]{csquotes} 

\makeatletter%
\@ifpackageloaded{hyperref}{}{}%
\makeatother%


\usepackage{color}
\usepackage{amsmath}

\usepackage{footnote}
\usepackage{amsfonts,color}
\usepackage{amsmath,amssymb}
\usepackage{amssymb} 
\usepackage{mathtools}
\usepackage{amsthm,wasysym}
\usepackage{framed}
\usepackage{cancel}
\usepackage{graphicx}
\usepackage{caption}
\usepackage{subcaption}
\usepackage{rotating}

\usepackage[english]{babel}
\usepackage[utf8]{inputenc}
\makeatletter
\usepackage{float}
\usepackage{wrapfig}
\restylefloat{figure}

\makeatletter
\let\@fnsymbol\@arabic
\makeatother

\usepackage{bbm}

\setcounter{tocdepth}{2} 

\def\dd{\displaystyle}

\setlength{\textheight}{23cm}
\setlength{\textwidth}{17cm}
\setlength{\topmargin}{-1cm}
\setlength{\oddsidemargin}{-0cm}
\setlength{\evensidemargin}{-1cm}

\setlength\arraycolsep{2pt}

\def\barr{\begin{array}}
	\setcounter{secnumdepth}{4}
	\setcounter{tocdepth}{2}
	\usepackage{lscape}

	\def\earr{\end{array}}
\def\bec#1{\begin{equation}\label{#1}}
\def\becn{\begin{equation*}}
\def\endec{\end{equation}}
\def\endecn{\end{equation*}}
\def\dd{\displaystyle}
\def\bfm#1{\mbox{\boldmat}}

\renewcommand{\dd}{\displaystyle}

%
%
%
%
%

%

%

%

%



\usepackage{enumitem}




\usepackage{pifont}

\tikzset{
state/.style={
	rectangle,
	rounded corners,
	draw=black, very thick,
	minimum height=2em,
	inner sep=6pt,
	text centered,
}
}
%

\newcommand{\imin}[1][\lambda]{m{\ifx&#1&\else(#1)\fi}}
\newcommand{\imax}[1][\lambda]{M{\ifx&#1&\else(#1)\fi}}
\newcommand{\iset}[1][\lambda]{J{\ifx&#1&\else(#1)\fi}}
%
%

%

%
%



\allowdisplaybreaks[1]

\usepackage{titletoc}
\begin{document}
\title{\vspace*{-2cm} The Biot stress - right stretch  relation for  the compressible Neo-Hooke-Ciarlet-Geymonat model and Rivlin's cube problem
}
\author{ 
	Ionel-Dumitrel Ghiba\,\thanks{Ionel-Dumitrel Ghiba,   Alexandru Ioan Cuza University of Ia\c si, Department of Mathematics,  Blvd. Carol I, no. 11, 700506 Ia\c si,
		Romania;  Octav Mayer Institute of Mathematics of the
		Romanian Academy, Ia\c si Branch,  700505 Ia\c si, Romania, email: dumitrel.ghiba@uaic.ro}\ , \quad Franz Gmeineder\,\thanks{Franz Gmeineder,  Department of Mathematics and Statistics, University of Konstanz, Universit\"atsstrasse 10, 78457 Konstanz, Germany,
		email: franz.gmeineder@uni-konstanz.de}\  , \quad  Sebastian Holthausen\,\thanks{Sebastian Holthausen,  Lehrstuhl f\"{u}r Nichtlineare Analysis und Modellierung, Fakult\"{a}t f\"{u}r Mathematik, Universit\"{a}t Duisburg-Essen, Thea-Leymann Str. 9, 45127 Essen, Germany, email: sebastian.holthausen@uni-due.de}\ ,\\ Robert J. Martin\,\thanks{Robert J. Martin,  Lehrstuhl f\"{u}r Nichtlineare Analysis und Modellierung, Fakult\"{a}t f\"{u}r Mathematik, Universit\"{a}t Duisburg-Essen, Thea-Leymann Str. 9, 45127 Essen, Germany, email: robert.martin@uni-due.de} \quad
	and\quad Patrizio Neff\,\thanks{Patrizio Neff,  Head of Lehrstuhl f\"{u}r Nichtlineare Analysis und Modellierung, Fakult\"{a}t f\"{u}r Mathematik, Universit\"{a}t Duisburg-Essen,  Thea-Leymann Str. 9, 45127 Essen, Germany, email: patrizio.neff@uni-due.de}}

\maketitle

\begin{abstract}

The aim of the  paper is to recall the importance of the study of invertibility and monotonicity of stress-strain relations for investigating the non-uniqueness and bifurcation of homogeneous solutions of the equilibrium problem of a hyperelastic cube subjected to equiaxial tensile forces.  In other words, we reconsider a remarkable possibility in this nonlinear scenario: Does  symmetric loading lead only to symmetric deformations or also to asymmetric deformations? If so, what can we say about monotonicity for these homogeneous solutions, a property which is less restrictive than the energetic stability criteria of homogeneous solutions for  Rivlin's cube problem. 
For the Neo-Hooke type materials we establish what  properties the volumetric function $h$ depending on $\det F$ must have to ensure the existence of a unique radial solution (i.e. the cube must continue to remain a cube) for any magnitude of radial stress acting on the cube.  The function $h$ proposed by  Ciarlet and Geymonat satisfies  these conditions. However, discontinuous equilibrium trajectories may occur, characterized by abruptly appearing non-symmetric deformations with increasing load, and  a cube can instantaneously become a parallelepiped.   Up to the load value for which the bifurcation in the radial solution is realized local monotonicity holds true. However, after exceeding this value, monotonicity no longer occurs on homogeneous deformations which, in turn,  preserve the cube shape. 

\end{abstract}

\setcounter{tocdepth}{3}\vspace*{-0.4cm}
\tableofcontents
\section{Introduction}

The theory of nonlinear elasticity is undoubtedly applicable in numerous contexts. However, depending on the specific phenomena we aim to analyze, different types of elastic energies come into play. Various materials exhibit different behaviors in terms of elasticity. In hyperelasticity, as considered here, stress is determined by the elastic energy density, making the selection of an energy function a crucial constitutive decision. The assumptions regarding the stress-strain relationship are referred to as constitutive requirements.

Therefore, one main task in hyperelasticity is to find an energy (or at least a family of energies) describing the behaviour of all,  or at least  a large class of materials. This question was raised by Clifford A.\ Truesdell (1919-2000) in ``Das ungel\"oste Hauptproblem der endlichen Elastizit\"atstheorie. Zeit. Angew. Math.  Mech. 36 (3-4):  97-103, 1956". At present, however, there is no  mathematical model in classical nonlinear elasticity
which is capable of describing the correct physical or mechanical behaviour for every elastic material, especially for large strains and for which the existence of the minimizer of the corresponding variational problem or the Euler-Lagrange equations is ensured.

For different type of materials or for various behaviours which we wish to capture in the modelling process, we must choose an appropriate energy. In this contribution, we reconsider the classical compressible polyconvex Neo-Hooke-type energies (Hadamard materials) \cite{tarantino2008homogeneous} 
\begin{align}\label{ine}
W_{\rm NH} (F)=\frac{\mu }{2}\|F\|^2+ h(\det F),
\end{align}
where $h$ is a convex function\footnote{In order to have a stress free configuration, the function $h$ must satisfy
	$
	\frac{3}{2}\mu +h^\prime(1)=0.$}. Here, we pay special attention to the Ciarlet-Geymonat-type energy \footnote{Note that the original Ciarlet-Geymonat energy reads
	\begin{align}
	W_{\rm CG} (F)=a \,\|F\|^2+b\,\|{\rm Cof}\, F\|^2+c\,(\det F)^2-d\, \log(\det \, F),
	\end{align}
	where $a, b, c, d$ are positive constants. This energy is polyconvex and agrees with the Saint-Venant energy  quadratic in  the Green-St Venant strain tensor $ E=\frac{1}{2}(C-\id)$. For the original Ciarlet-Geymonat model it follows  that the associated minimization problem has at least
	one solution by Ball’s theorem \cite{Ball77}. Due to weak coercivity, the same is not known for \eqref{hcg}.} \cite{ciarlet1982lois,ciarlet1982quelques} for compressible materials, i.e., when the function $h$ is of the form
\begin{align}\label{hcg}
h_{\rm CG}(x)=-\mu \,\log x+\frac{\lambda}{4}(x^2-2\, \log x-1), \qquad \lambda>0,
\end{align}
with given positive constitutive parameters $\mu $ and $\lambda$. In the following, we shall refer to this model as the Neo-Hooke-Ciarlet-Geymonat model.

This paper revisits the issue of monotonicity and invertibility of a fundamental relationship: the Biot stress-right stretch  tensor relation. We use recent analytical findings in this area as a first step towards addressing the non-symmetric bifurcation in the Rivlin cube problem associated with the Neo-Hooke-Ciarlet-Geymonat \cite{ciarlet1982lois,ciarlet1982quelques} energy model. In order to understand these phenomena, we incorporate new insights pertaining to these constitutive equations. Studying the deformations of a uniformly loaded cube \cite{rivlin1974stability} is an important  challenge within finite elasticity, revealing intriguing behaviours even for simple energy functions. Despite the straightforward mathematical setup of this equilibrium problem, its resolution can pose difficulties due to its inherent nonlinearity. Understanding the stability of solutions and the local monotonicity will add another layer of complexity.

The equilibrium problem of a cube under equitriaxial tensions was initially explored by Rivlin \cite{rivlin1974stability} and then by Rivlin and Beatty \cite{rivlin2003dead}, by Reese and Wriggers \cite{reese1997material}, by Mihai,Woolley and Goriely \cite{mihai2019likely} and by Tarantino \cite{tarantino2008homogeneous}, who all exposed multiple solutions, particularly in the realm of incompressible Neo-Hookean materials and compressible Neo-Hookean materials, respectively.  Surprisingly, these solutions may lack symmetry, deviating from the (perhaps) expected behavior even under symmetric external loads. Rivlin further found that only one solution maintains full symmetry with the loading conditions, but becomes unstable under higher tensile loads.
Ball and Schaeffer \cite{ball1983bifurcation} have further explored into the case of incompressible Mooney-Rivlin materials, discovering the possibility of secondary bifurcations, a phenomenon absent in the Neo-Hookean scenario. They applied techniques from singularity theory to study the local behaviour around bifurcation points.

The paper by Tarantino \cite{tarantino2008homogeneous}  aims to analyse both symmetric and asymmetric equilibrium configurations of bodies composed of general compressible isotropic materials. Special focus is put on exploring non-unique equilibrium states and relevant bifurcation phenomena. Building upon previous contributions by Ball  \cite{ball1984differentiability} and Chen \cite{chen1987stability,chen1995stability}, in  \cite{tarantino2008homogeneous}  a stability analysis is proposed to evaluate the stability of various homogeneous  equilibrium branches, see also \cite{soldatos2006stability}. In the present paper, we  rediscover some results obtained by Tarantino \cite{tarantino2008homogeneous}, but we  {rely} on some pertinent analytical explanations, and present them in relation to our new results concerning the invertibility and monotonicity in nonlinear elasticity \cite{MartinVossGhibaNeff}.

After a presentation of the problem and the general framework, in this article we establish results on the invertibility and monotonicity of stress-strain relations.  These findings will subsequently be used for the Biot stress tensor-left stretch tensor relation and in the study of the Rivlin cube problem. For Neo-Hooke type materials we establish what  properties the volumetric function $h$ depending on $\det F$ in \eqref{ine} must have to ensure the existence of a unique radial solution (i.e., the cube must continue to remain a cube) for any magnitude of radial stress acting on the cube. 

In particular, we prove that the function $h\equiv h_{CG}$ in \eqref{ine}  defining the Ciarlet-Geymonat energies has these properties. For the Neo-Hooke-Ciarlet-Geymonat model, after identifying the radial solutions, we identify the existence of non-radial solutions (i.e., the cube turns into a parallelepiped)  for the extension case. These solutions do not exist for the case of compression or if the magnitude of the forces does not exceed a certain critical value $\alpha^\flat$. Moreover, for radial and non-radial solutions the problem of monotonicity is studied using the results from \cite{MartinVossGhibaNeff}. Specifically, we prove that radial solutions ensure local monotonicity up to the critical value $\alpha^*\geq\alpha^\flat$ of the forces acting on the faces of the cube. This is where bifurcation occurs, i.e., the solution is no longer locally unique and beyond this value monotonicity no longer occurs in radial solutions. In this regard, we prove that the critical value $\alpha^*$ corresponds to the critical values of the stretch for which the invertibility in terms of the principal stress-principal stretch relation \cite{MartinVossGhibaNeff} is lost in radial solutions.

Starting from a value of the magnitude  $\alpha^\flat$ of the forces (less than the critical value that produces the bifurcation) there are two other types of non-radial solutions (other types are obtained by permutations of them). We show that these types of non-radial solutions cannot all have different eigenvalues but certainly at least two are equal. Returning to the type of non-radial solutions, both appear in a {\bf discontinuous manner} for a  value $\alpha^\flat$ of the magnitude of the forces and then depend continuously on the intensity of the forces.  One class of non-radial solutions continuously moves towards and through the bifurcation branch, while the other moves away from the bifurcation point. 
Numerical tests have shown that, while the first class does not ensure local monotonicity, the second one does enjoy monotonicity. It is for this reason that the latter solution  meets the physical expectations in a better way.  

\section{Statement of the problem}\setcounter{equation}{0}

Let $\Omega\subset{\R^3}$  be a bounded domain with Lipschitz boundary
$\partial\Omega$. A mapping $\varphi\col\Omega\to\R$ describes the deformation of the domain $\Omega$. The domain $\Omega$ is called the initial (undeformed) configuration, while its image $\Omega_c:=\varphi(\Omega)$ is called the actual (deformed) configuration. Each of these configurations could be considered as  a reference configuration, depending on the practical problem we solve or model.  Since we do not allow for self-intersection of the material,  there exists the inverse mapping $\varphi^{-1}\col\Omega_c\to\Omega$ from the deformed configuration to its initial configuration. Therefore, imposing the preservation of orientation,  the  \emph{deformation gradient} defined by
$
F\colonequals{\rm D}\varphi\in \mathbb{R}^{3\times 3}
$
satisfies $F\in\GLp(3)$, i.e., $J=\det F>0$.   For further notation, the reader is referred to Appendix \ref{notations}.

From a geometric or analytic point of view, this would suffice for a complete description of the deformation. However, in elasticity theory we assume that the domain $\Omega$ is filled by an elastic body. Thus, the aim is to take into account the physical response of the body, meaning the constitutive relation between stress (internal forces) and strain (amount of deformation). 
	In the context of nonlinear hyperelasticity, where generalized convexity properties have an especially long and rich history \cite{Ball77,knowles1976failure,knowles1978failure}, the material behaviour of an elastic solid is described by a potential energy function
	$
	W\col\GLp(3)\to\R\,,\  F\mapsto W(F)
	$
	defined on the group $\GLp(3)$ of invertible matrices with positive determinants. 
		In hyperelasticity, the tensor which describes the force of the deformed material per original area  (stress) is  the first Piola-Kirchhoff stress tensor, denoted here by $S_1(F)$, and    the stress-strain relation is described by the energy density potential $W:\GLp(3)\to \mathbb{R}$, through $S_1={\rm D}_F[W(F)]$.  We assume that  the material is homogeneous. 
	The elastic energy potential $W\col\GLp(3)\to\R$ is also  assumed to be \emph{objective} (or \emph{frame-indifferent}) as well as \emph{isotropic}, i.e., it to satisfy
	$
	W(Q_1F\,Q_2) = W(F)
	\ \text{for all }  \;F\in\GLp(3)
	\ \text{and all }\; Q_1,Q_2\in\SO(3)\,.
	$
	 Hence $W(F)=\widehat{W}(U)$, where $U$ is  the right
	stretch tensor, i.e.\ the unique element of ${\rm Sym}^{++}(3)$ for which $U^2=C:=F^TF$; here and throughout, ${\rm Sym}^{++}(3)$ denotes the positive definite, symmetric tensors.

	In the absence of body forces, the general boundary value problem  is to find the solution $\varphi$ of the equilibrium equation
	\begin{align}\label{eq}
	{\rm Div} \,S_1({\rm D} \varphi)=0\qquad  \text{in} \qquad \Omega\subset \R^{3},
	\end{align}
	where $S_1({\rm D} \varphi)$ is the first Piola-Kirchhoff stress tensor, subject to the boundary conditions
	\begin{align}\label{bc}
	S_1.\, N=\widehat{s}_1.
	\end{align}
	Here, $N$ is the unit normal at the boundary $\partial \Omega$ and the vector $\widehat{s}_1$ is given. 
	
In this article we study the invertibility of the stress-stretch relation, the monotonicity and the bifurcation problem for a dead loading problem. In \textbf{Rivlin's cube problem} \cite[page 15]{Marsden83}(see also \cite{ball1983bifurcation,wan1983symmetry,rivlin2003dead,tarantino2008homogeneous}) the unit cube is subjected to equal and opposite normal dead loads on all faces.\footnote{``Rivlin's solutions have been known for nearly half a century. Nevertheless, we have yet to find an experiment that demonstrates these solutions" \cite{chen1996stability}.} \textit{Dead loading} is a simple example of a traction boundary condition where  $\widehat{s}_1({x})$ is a constant vector at each point of the boundary $\partial \Omega$. Thus, the boundary conditions on the face of the unit coordinate normal $N_i$ are
\begin{align}\label{bcr}
S_1.\, N_i=\alpha\, N_i,\qquad\qquad i=1,2,3,
\end{align}
with $\alpha$ indicating the amount of load.

A minimizer  $\varphi\in C^2(\Omega)$ of the total energy functional  given by 
\begin{align}\label{totale1}
I(\varphi)=\int_{\Omega} W({\rm D}  \varphi)\, dV-\int_{\partial \Omega} \langle S_1.\,N, \varphi\rangle\, dA=\int_{\Omega} W({\rm D}  \varphi)\, dV-\int_{\partial \Omega} \langle \widehat{s}_1, \varphi\rangle\, dA
\end{align} 
is a solution  of the  boundary value problem given by \eqref{eq} and \eqref{bc}.

For Rivlin's cube problem, the body is subjected to a uniform load on the boundary $S_1.\, N_i=\alpha\, N_i,\, i=1,2,3$. An application of the divergence theorem yields that  the total energy functional is given by
\begin{align}\label{totale}
I(\varphi)=\int_{\Omega} W({\rm D}  \varphi) dV-\int_{\partial \Omega} \alpha\,\langle N, \varphi\rangle dA=\int_{\Omega}[W({\rm D} \varphi) -\alpha\, {\rm div}\varphi ]\, dV=\int_{\Omega}[W({\rm D} \varphi) -\alpha\, \tr({\rm D} \varphi)]\, dV.
\end{align} 
A deformation $\varphi$ is  homogeneous if the deformation gradient ${\rm D} \varphi$ is constant in $\Omega$. For a homogeneous deformation the equilibrium equations are immediately satisfied, while the boundary conditions give rise to a  system of nonlinear algebraic equations.

We recall that \cite[page 144]{Truesdell65}
\begin{align}\label{difs}
T_{\rm Biot}=&R^T S_1,
\end{align}
where 	$T_{\rm Biot}=R^TS_1(F)={\rm D}_U[\widehat{W}(U)]$
is the symmetric Biot stress tensor and $R$ is the orthogonal matrix of the polar decomposition $F=R\,U=V R$, see \cite{neff2013grioli,borisov2019optimality,fischle2017grioli}; here, $V=\sqrt{F\,F^T}$ is the left stretch tensor. It is known that 
\begin{center}
	\fbox{
		$T_{\rm Biot}$ is symmetric and represents ``the principal forces acting in the reference system".}
\end{center}

Since no rotations are present in the cube problem ($R=\id$),  we are able to rephrase Rivlin's cube problem \cite{Ogden83} as the algebraic nonlinear system
\begin{align}\label{Tbiot11}
T_{\rm Biot}(U)N_i=\alpha N_i\quad \iff\quad (T_{\rm Biot}(U)-\alpha\cdot\id) N_i=0,\qquad i=1,2,3,
\end{align}
where  $N_i$ are the linearly  independent normals to the faces. Here, we adopt the convention that only homogeneous solutions $U$ are  considered.  Therefore, \eqref{Tbiot11} is equivalent to
\begin{align}\label{Tbiot1}
T_{\rm Biot}(U)=\alpha\cdot \id.
\end{align}

For the Neo-Hooke model, Rivlin has shown that the problem \eqref{Tbiot1} admits several homogeneous  solutions\footnote{The radial solution is an abbreviation for the equitriaxial stretching $\lambda_1=\lambda_2=\lambda_3=\beta^+$.}, see Figure \ref{bifu}, and for a certain load parameter $\alpha$ the (always existing) homogeneous radial solution $U=\beta^+\id$, $\beta^+>0$, becomes unstable. Thus, an initially homogeneous and perfectly isotropic material would not behave as we expect intuitively from an isotropic material.
\begin{figure}[h!]
	\centering
	\includegraphics[scale=0.45]{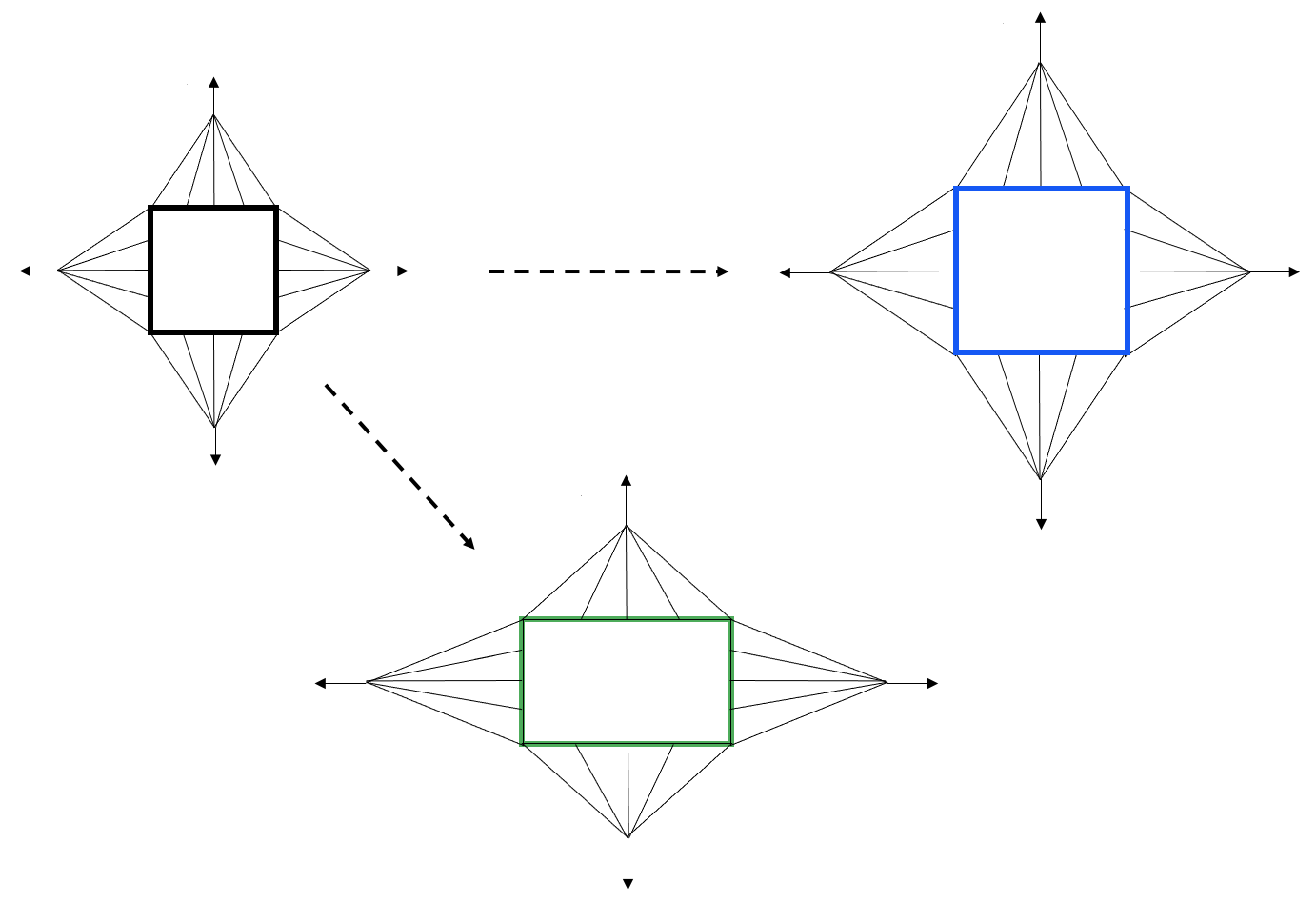}
	\put(-257,145){\footnotesize{$1$ }}
	\put(-265,137){\footnotesize{$1$ }}
	\put(0,145){\footnotesize{$f$ }}
	\put(-58,145){\footnotesize{$\lambda_1$ }}
	\put(-65,135){\footnotesize{$\lambda_1$ }}
	\put(-80,52){\footnotesize{$f$ }}
	\put(-150,52){\footnotesize{$\lambda_1$ }}
	\put(-165,43){\footnotesize{$\lambda_2$ }}
	\put(-218,154){\footnotesize{expected ideal symmetric }}
	\put(-218,140){\footnotesize{(radial) elastic solution}}
	\put(-280,90){\footnotesize{non-symmetric,}}
	\put(-280,80){\footnotesize{non radial solution}}
	\put(-280,70){\footnotesize{physically admissible}}
	\put(-280,60){\footnotesize{elastic solution (?)}}
	\caption{\footnotesize{Bifurcation for the compressible Rivlin's cube dead load problem:  equal and opposite normal dead loads on all faces.}}
	\label{bifu}
\end{figure}%
Whether this can be really observed in experiments remains an open question. Discussing the related notion of Kearsley's instability, Batra et al.\ \cite[pages 710-711]{batra2005treloar} ``must conclude, rather prosaically, that Treloar's observation of two different stretches for equal loads is nothing else but another example of a notorious quality of rubber, namely the difficulty of quantitative reproducibility of rubber data and the unreliability of exact numbers obtained from rubber experiments", but provide new experimental data that indeed supports Kearsley's claims of instabilities in rubber sheets (cf.~\cite{gent2005elastic}).

For Neo-Hookean materials, the problem \eqref{Tbiot1} admits several non-symmetric homogeneous solutions \break $U\in {\rm Sym}^{++}(3).$
Based on \eqref{difs} we are able to investigate several equivalent statements in terms of different stress tensors. Since in Rivlin's cube problem there is no cause for a non-symmetric response, it is questionable if there should be any non-symmetric response\footnote{Of course, rubber is not at all incompressible under high pressure; rather, for moderate pressure, rubber ``tries" to respond in a way which preserves volume due to a comparatively low shear modulus compared to the bulk modulus}.

Ideally, we aim at the radial solution to be  locally unique among all other solutions.  In particular, we  insist on the logical rule that there is ``no effect without a corresponding cause". Since in Rivlin's cube problem there is no cause for a non-symmetric response, there should be no admissible non-symmetric response\footnote{``Experimentally" observed non-symmetric bifurcations seem to be inevitably accompanied by  permanent deformations \cite{batra2005treloar}.}. If this is or is not the case, depends on the chosen constitutive relation.

If there is   a radial  solution $U=\beta^+\,\id$ of Rivlin's cube problem then {\it invertibility}  and {\it monotonicity} of $T_{\rm Biot}:{\rm Sym}^{++}(3)\to {\rm Sym}(3)$  suffice to exclude  symmetric bifurcations altogether, as it will be seen in the rest of the paper.

\section{Constitutive requirements in nonlinear elasticity}\setcounter{equation}{0}
	\subsection{Invertibility }\label{ips}

	We consider the  general isotropic constitutive equation
	\begin{equation}
	\Sigma=\Sigma(U)\qquad \qquad \Sigma:{\rm Sym}^{++}(3)\to \Sym(3)
	\end{equation}
	where $\Sigma$ is some symmetric stress tensor and $U\in{\rm Sym}^{++}(3)$ is the stretch tensor. We are then interested in the following two important questions:
	\begin{itemize}
		\item[i)] (surjectivity) given any symmetric  tensor $\Sigma\in{\rm Sym}(3)$, does  there exist a positive definite tensor \break $U\in{\rm Sym}^{++}(3)$ such that $\Sigma=\Sigma(U)$?
		\item[ii)] (injectivity) for a given symmetric  tensor $\Sigma\in{\rm Sym}(3)$, does there exist at most  one  $U\in{\rm Sym}^{++}(3)$ such that $\Sigma=\Sigma(U)$?
	\end{itemize}
	
	It is clear that when  an idealised model is proposed (hence, no elasto-plastic response is expected) the  first requirement seems mandatory. The second requirement is the first step in order to exclude  bifurcation  \cite{rivlin1974stability} for a dead loading problem.

	We have observed \cite{MartinVossGhibaNeff} a possible way to study the invertibility of the map $U\mapsto T_{\rm Biot}(U)$. 	Since, in general, it is not easy to work with tensors (matrices) in three dimensions, we consider the singular values (the principal stretches) $\lambda_1$, $\lambda_2$, $\lambda_3$ of $F$, i.e., the positive eigenvalues  of $U$.
If $\Sigma_f\col\Symn\to\Symn$ is an	 isotropic tensor function satisfying \begin{align}\Sigma_f(Q^T\cdot\diag(\lambda_1,\lambda_2,\lambda_3)\cdot Q)=Q^T\cdot\Sigma_f(\diag(\lambda_1,\lambda_2,\lambda_3))\cdot Q\end{align} then 
\begin{align}
 \Sigma_f(U):=\Sigma_f(\underbrace{Q^T\cdot\diag(\lambda_1,\lambda_2,\lambda_3)\cdot Q}_{S\,\in\,\Sym(3)}) = \underbrace{Q^T\cdot\diag(f(\lambda_1,\lambda_2,\lambda_3))\cdot Q}_{\Sigma_f(S)\,\in\,\Sym(3)} \quad\forall\,Q\in\On\label{eq:introductionMatrixFunction}
\end{align}
with a vector-function $f=(f_1,f_2,f_3)\col\R^3\to\R^3$ which fulfills 
\[
f_i(\lambda_{\pi(1)},\lambda_{\pi(2)},\lambda_{\pi(3)}) = f_{\pi(i)}(\lambda_1,\lambda_2,\lambda_3)
\]
for any permutation $\pi\col\{1,2,3\}\to\{1,2,3\}$. Here, $\Symn$ denotes the space of symmetric $3\times 3$ matrices, $\On$ is the orthogonal group and $\diag(\lambda_1,\lambda_2,\lambda_3)$ is the diagonal matrix with diagonal entries $\lambda_1,\lambda_2,\lambda_3\,.$ The permutation symmetry implies and it is implied by isotropy.

Indeed, in many situations the stress-strain relations are characterized by the relation between their corresponding principal values, i.e., by the relations between the principal stretches $\lambda_1, \lambda_2, \lambda_3 $ and the  principal forces (principal Biot-stresses)
  \begin{align}\label{BT}T_i= \dd\frac{\partial g(\lambda_1,\lambda_2,\lambda_3)}{\partial \lambda_i},\ i=1,2,3.\end{align} 
 In \eqref{BT}, $g\colon\mathbb{R}_+^3\to \mathbb{R}$ is the unique permutation symmetric function  of the singular values of $U$ (principal stretches) such that $W(F)=\widehat{W}(U)=g(\lambda_1,\lambda_2,\lambda_3)$, where $\mathbb{R}_+^3=(0,\infty)\times(0,\infty)\times (0,\infty)$, and 
 \begin{align}
 \widehat{T}:=\begin{pmatrix}
 T_1,&T_2,&T_3
 \end{pmatrix}^T.
 \end{align}
The functions $f$ and $\Sigma_f$ related by eq.~\eqref{eq:introductionMatrixFunction} share a number of properties related to invertibility and monotonicity.
\begin{theorem}[\cite{MartinVossGhibaNeff}]\label{theorem:invertibility}
	Let $f\col\R^3_+\to\R^3$ be symmetric.
	\begin{itemize}
		\item[i)] The function $\Sigma_f\col{\rm Sym}^{++}(3)\to\Symn$ is injective if and only if $f$ is injective.
		\item[ii)] The function $\Sigma_f\col{\rm Sym}^{++}(3)\to\Symn$ is surjective if and only if $f$ is surjective.
	\end{itemize}
	In particular, $\Sigma_f$ is invertible if and only if $f$ is invertible.
\end{theorem}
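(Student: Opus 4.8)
The plan is to transfer every property back and forth through the spectral decomposition, exploiting that $\Sigma_f$ restricted to diagonal matrices is exactly $f$ — by~\eqref{eq:introductionMatrixFunction} one has $\Sigma_f(\diag(\lambda_1,\lambda_2,\lambda_3))=\diag(f(\lambda_1,\lambda_2,\lambda_3))$ — and that $\Sigma_f$ commutes with orthogonal conjugation, while every $U\in{\rm Sym}^{++}(3)$ equals $Q^T\diag(\lambda_1,\lambda_2,\lambda_3)Q$ for some $Q\in\On$ and $\lambda_i>0$. Three of the four implications are then short. If $\Sigma_f$ is injective and $f(\lambda^{(1)})=f(\lambda^{(2)})$, apply injectivity of $\Sigma_f$ to $\diag(\lambda^{(1)})$ and $\diag(\lambda^{(2)})$ to conclude $\lambda^{(1)}=\lambda^{(2)}$. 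If $f$ is surjective and $\Sigma\in\Symn$ is given, diagonalize $\Sigma=Q^T\diag(\mu_1,\mu_2,\mu_3)Q$, choose $\lambda\in\R^3_+$ with $f(\lambda)=\mu$, and verify $\Sigma_f(Q^T\diag(\lambda)Q)=\Sigma$. If $\Sigma_f$ is surjective and $y\in\R^3$ is given, pick $U\in{\rm Sym}^{++}(3)$ with $\Sigma_f(U)=\diag(y)$ and diagonalize $U=Q^T\diag(\lambda)Q$; then $Q^T\diag(f(\lambda))Q=\diag(y)$, so $f(\lambda)$ and $y$ agree up to a reordering, and since $\{f(\sigma\cdot\lambda):\sigma\text{ a permutation}\}$ consists precisely of all reorderings of $f(\lambda)$ (this is where the permutation symmetry of $f$ enters), some $\sigma\cdot\lambda\in\R^3_+$ satisfies $f(\sigma\cdot\lambda)=y$.

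The real work is to deduce injectivity of $\Sigma_f$ from injectivity of $f$. Suppose $\Sigma_f(U_1)=\Sigma_f(U_2)=:S$. First I would record that $U_k$ commutes with $S$: writing $U_k=Q^T\Lambda Q$ with $\Lambda$ diagonal, $S=\Sigma_f(U_k)=Q^T\diag(f(\Lambda))Q$ commutes with $U_k=Q^T\Lambda Q$ since $\Lambda$ and $\diag(f(\Lambda))$ are diagonal. Hence each $U_k$ preserves every eigenspace of $S$, and one may choose an orthonormal basis $\{e_1,e_2,e_3\}$ of eigenvectors of $S$ that simultaneously diagonalizes $U_1$; in this basis $S=\diag(s_1,s_2,s_3)$, $U_1=\diag(a_1,a_2,a_3)$, and~\eqref{eq:introductionMatrixFunction} yields $\diag(f(a))=\Sigma_f(U_1)=S$, i.e.\ $f(a)=s$. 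The decisive rigidity observation is that whenever $s_i=s_j$ with $i\neq j$ we get $f_i(a)=f_j(a)$, hence by permutation symmetry $f\bigl((ij)\cdot a\bigr)=(ij)\cdot f(a)=f(a)$, and injectivity of $f$ forces $(ij)\cdot a=a$, that is $a_i=a_j$. Thus $U_1$ is constant on each eigenspace of $S$, so it is a function of $S$; the same argument for $U_2$ shows $U_2$ is a function of $S$ as well, whence $\{e_1,e_2,e_3\}$ also diagonalizes $U_2$, say $U_2=\diag(b_1,b_2,b_3)$ with $\diag(f(b))=\Sigma_f(U_2)=S=\diag(s)$. Then $f(a)=s=f(b)$, and injectivity of $f$ gives $a=b$, so $U_1=U_2$. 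Finally, ``$\Sigma_f$ invertible $\iff$ $f$ invertible'' is the conjunction of i) and ii).

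The step I expect to be the main obstacle is precisely this rigidity argument in the presence of a repeated eigenvalue of $S$. If one only compares spectra — which is all the naive computation $Q^T\diag(f(a))Q=S=\widetilde Q^T\diag(f(b))\widetilde Q$ gives — one merely concludes that $U_1$ and $U_2$ are orthogonally conjugate to the same diagonal matrix, i.e.\ have the same eigenvalues, which is strictly weaker than $U_1=U_2$ once $S$ has a multiple eigenvalue. The two ingredients that bridge the gap are the commutation identity $[U_k,S]=0$, which ties the eigenvectors of $U_k$ to those of $S$, and the permutation symmetry of $f$ combined with its injectivity, which prevents $U_k$ from splitting a degenerate eigenspace of $S$ into distinct eigenvalues; everything else is routine bookkeeping with the spectral theorem.
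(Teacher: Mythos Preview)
The paper does not actually prove Theorem~\ref{theorem:invertibility}; it is stated with a citation to the forthcoming work \cite{MartinVossGhibaNeff} and used without argument. So there is no in-paper proof to compare against, and your proposal supplies what the paper omits.

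Your argument is correct. The three easy implications are handled cleanly, and the hard direction (injectivity of $f$ $\Rightarrow$ injectivity of $\Sigma_f$) is where you put the effort, rightly so. The commutation $[U_k,S]=0$ is the standard entry point; the step that genuinely closes the gap is your rigidity observation: if $s_i=s_j$ then $f((ij)\cdot a)=(ij)\cdot f(a)=f(a)$, whence $(ij)\cdot a=a$ by injectivity of $f$, so each $U_k$ acts as a scalar on every eigenspace of $S$. That is exactly what is needed to upgrade ``same eigenvalues'' to ``same matrix'' in the degenerate-spectrum case, and it is stated precisely. One small point worth making explicit in a write-up: the conclusion ``$U_2$ is a function of $S$'' is obtained in a basis adapted to $U_2$, but the statement $U_2|_{\ker(S-s_k\id)}=c_k\,\id$ is basis-free, which is why the original basis $\{e_1,e_2,e_3\}$ (chosen for $U_1$) automatically diagonalizes $U_2$ as well. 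You say this, but a reader might stumble if it is not spelled out.
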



\noindent In particular, using the result by  Katriel \cite{katriel2016mountain} (see also \cite{galewski2014conditions})  proving the global homeomorphism theorem of Hadamard, we obtain a sufficient criterion for the global invertibility of an isotropic tensor function.
\begin{proposition}\label{invprop}
	Assume that $\widetilde{\Sigma}_{\widetilde{f}}:{\rm Sym}(3)\to{\rm Sym}(3)$ is an isotropic $C^1$-function defined by the vector-function ${\widetilde{f}}:\mathbb{R}^3\to \mathbb{R}^3$ such that \begin{enumerate}
		\item ${\rm D}{\widetilde{f}}\,(x_1,x_2,x_3)$ is invertible for any $(x_1,x_2,x_3)\in \mathbb{R}^3$;
		\item $\|{\widetilde{f}}\,(x_1,x_2,x_3)\|_{\mathbb{R}^3}\to \infty$ as $\|(x_1,x_2,x_3)\|_{\mathbb{R}^3}\to \infty$.
		
	\end{enumerate}
	Then $U\mapsto \widetilde{\Sigma}_{\widetilde{f}}(U)$ is a global diffeomorphism from ${\rm Sym}(3)$ to ${\rm Sym}(3)$.
\end{proposition}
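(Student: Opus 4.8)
\emph{Proof proposal.} The plan is to transfer the statement from the tensor function $\widetilde{\Sigma}_{\widetilde{f}}$ on ${\rm Sym}(3)$ to its generating vector field $\widetilde{f}$ on $\mathbb{R}^3$, settle it there with the Hadamard--Katriel global homeomorphism theorem, and then transfer back, the only nontrivial point on the way back being the regularity of the inverse. First I would record that Theorem~\ref{theorem:invertibility} holds verbatim with $\mathbb{R}^3_+$ replaced by $\mathbb{R}^3$ and ${\rm Sym}^{++}(3)$ by ${\rm Sym}(3)$: its proof only uses the spectral decomposition of a symmetric matrix together with the permutation symmetry of the generator, and never positivity. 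Consequently $\widetilde{\Sigma}_{\widetilde{f}}\col{\rm Sym}(3)\to{\rm Sym}(3)$ is bijective if and only if $\widetilde{f}\col\mathbb{R}^3\to\mathbb{R}^3$ is bijective.

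Next I would verify that $\widetilde f$ satisfies the hypotheses of Katriel's version of the global homeomorphism theorem \cite{katriel2016mountain}. The first assumption, invertibility of ${\rm D}\widetilde f(x)$ for every $x$, makes $\widetilde f$ a local $C^1$-diffeomorphism. The second assumption, $\|\widetilde f(x)\|_{\mathbb{R}^3}\to\infty$ as $\|x\|_{\mathbb{R}^3}\to\infty$, makes $\widetilde f$ proper: the preimage under the continuous map $\widetilde f$ of a compact set is closed, and it is bounded since an unbounded preimage would produce a sequence $x_n$ with $\|x_n\|\to\infty$ but $\widetilde f(x_n)$ bounded, contradicting coercivity. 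Hence $\widetilde f$ is a global $C^1$-diffeomorphism of $\mathbb{R}^3$, and by the previous paragraph $\widetilde{\Sigma}_{\widetilde{f}}$ is a $C^1$ bijection of ${\rm Sym}(3)$.

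It then remains to see that $\widetilde{\Sigma}_{\widetilde{f}}^{-1}$ is $C^1$. The inverse $\widetilde f^{-1}$ is again permutation-symmetric (equivariance under coordinate permutations is preserved under inversion of a bijection) and $C^1$, so the isotropic tensor function $\widetilde{\Sigma}_{\widetilde f^{-1}}$ is well defined; and directly from the common-eigenbasis construction one reads off $\widetilde{\Sigma}_{\widetilde f^{-1}}\circ\widetilde{\Sigma}_{\widetilde f}=\widetilde{\Sigma}_{\widetilde f^{-1}\circ\widetilde f}=\widetilde{\Sigma}_{{\rm id}_{\mathbb{R}^3}}={\rm id}_{{\rm Sym}(3)}$, and symmetrically for the other composition, so that $\widetilde{\Sigma}_{\widetilde{f}}^{-1}=\widetilde{\Sigma}_{\widetilde f^{-1}}$. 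Finally I would invoke the differentiability theorem for isotropic/symmetric matrix functions of Ball \cite{ball1984differentiability}: the isotropic tensor function generated by a $C^1$ symmetric map is itself $C^1$. Applied to $\widetilde f^{-1}$ this gives $\widetilde{\Sigma}_{\widetilde{f}}^{-1}\in C^1$, so $\widetilde{\Sigma}_{\widetilde{f}}$ is a global $C^1$-diffeomorphism of ${\rm Sym}(3)$.

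The first two steps are bookkeeping plus one citation, so I expect the main obstacle to be the last one: the $C^1$-regularity of the isotropic tensor function at matrices with repeated eigenvalues, where the naive ``apply $\widetilde f^{-1}$ to the eigenvalues'' description is not manifestly differentiable and one genuinely needs (or must reprove) Ball's result. An alternative that sidesteps Ball's theorem is to check directly that ${\rm D}\widetilde{\Sigma}_{\widetilde f}(S)$ is invertible for every $S\in{\rm Sym}(3)$ and conclude by the inverse function theorem: in an eigenbasis of $S$ this derivative is block diagonal, the diagonal block being ${\rm D}\widetilde f(\lambda_1,\lambda_2,\lambda_3)$ (invertible by the first assumption) and the off-diagonal blocks acting as multiplication by $\tfrac{\widetilde f_i(\lambda)-\widetilde f_j(\lambda)}{\lambda_i-\lambda_j}$ when $\lambda_i\neq\lambda_j$ and by $\partial_{\lambda_i}\widetilde f_i(\lambda)-\partial_{\lambda_j}\widetilde f_i(\lambda)$ when $\lambda_i=\lambda_j$; the former is nonzero because $\widetilde f_i(\lambda)=\widetilde f_j(\lambda)$ with $\lambda_i\neq\lambda_j$ would contradict injectivity of the symmetric map $\widetilde f$, while the latter is nonzero because its vanishing would make the antisymmetric part of the $(i,j)$-subblock of ${\rm D}\widetilde f(\lambda)$ degenerate, again contradicting the first assumption. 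Either route works, and in both the coincident-eigenvalue case is the crux.
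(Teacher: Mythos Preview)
Your proposal is correct and follows the same line the paper indicates: the paper does not give a detailed proof of this proposition but simply presents it as a direct consequence of Katriel's global homeomorphism theorem applied to $\widetilde f$ together with Theorem~\ref{theorem:invertibility}. You supply exactly this argument and, in addition, carefully address the $C^1$-regularity of the inverse via Ball's differentiability theorem (or alternatively via a direct check that ${\rm D}\widetilde{\Sigma}_{\widetilde f}$ is invertible), a point the paper leaves implicit; your treatment is thus more complete than what appears in the text.
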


Let us remark that Proposition~\ref{invprop} is not directly applicable to $\Sigma_f:{\rm Sym}^{++}(3)\to{\rm Sym}(3)$. However, we have the following corollary to Katriel's result: 
\begin{corollary}\label{corinvprop}
	Assume that $\Sigma_f:{\rm Sym}^{++}(3)\to{\rm Sym}(3)$ is an isotropic $C^1$-function such that\begin{enumerate}
		\item ${\rm D}(f\circ \exp)\,(x_1,x_2,x_3)$ is invertible\footnote{Here, $\exp\,(x_1,x_2,x_3)=(\exp x_1,\exp x_2,\exp x_3)$ for any $(x_1,x_2,x_3)\in \mathbb{R}^3$.} for any $(x_1,x_2,x_3)\in \mathbb{R}^3$;
		\item $\|(f\circ \exp)\,(x_1,x_2,x_3)\|_{\mathbb{R}^3}\to \infty$ as $\|(x_1,x_2,x_3)\|_{\mathbb{R}^3}\to \infty$.
	\end{enumerate}
	Then $U\mapsto \Sigma_f(U)$ is a global diffeomorphism from ${\rm Sym}^{++}(3)$ to ${\rm Sym}(3)$.
\end{corollary}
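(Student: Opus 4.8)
The plan is to reduce the statement to Proposition~\ref{invprop} by conjugating the tensor function $\Sigma_f$ with the matrix exponential $\exp\col{\rm Sym}(3)\to{\rm Sym}^{++}(3)$, which is a global diffeomorphism with inverse the principal matrix logarithm $\log\col{\rm Sym}^{++}(3)\to{\rm Sym}(3)$ (indeed ${\rm D}\exp(S)$, restricted to symmetric perturbations, has the strictly positive divided-difference eigenvalues $\tfrac{e^{x_i}-e^{x_j}}{x_i-x_j}$ on the eigenbasis of $S$, so $\exp$ is a smooth bijection with everywhere invertible differential). Set $\widetilde f\colonequals f\circ\exp\col\R^3\to\R^3$, where $\exp$ acts on $\R^3$ componentwise as in the footnote of the statement. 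Since the isotropy of $\Sigma_f$ forces $f$ to be symmetric and the componentwise exponential commutes with permutations of the arguments, $\widetilde f$ is again symmetric, and --- crucially --- it is now defined on \emph{all} of $\R^3$; hence it induces an isotropic $C^1$ tensor function $\widetilde\Sigma_{\widetilde f}\col{\rm Sym}(3)\to{\rm Sym}(3)$ in the sense of~\eqref{eq:introductionMatrixFunction}, defined on the full space ${\rm Sym}(3)$. This is exactly what remedies the failure of Proposition~\ref{invprop} to apply to $\Sigma_f$ directly.

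The first step is the intertwining identity $\widetilde\Sigma_{\widetilde f}=\Sigma_f\circ\exp$ on ${\rm Sym}(3)$. For $S=Q^T\diag(x_1,x_2,x_3)\,Q$ with $Q\in\OO(3)$ the matrix exponential is $\exp(S)=Q^T\diag(e^{x_1},e^{x_2},e^{x_3})\,Q\in{\rm Sym}^{++}(3)$, and applying~\eqref{eq:introductionMatrixFunction} to $\Sigma_f$ at this positive definite matrix gives
\begin{align*}
\Sigma_f(\exp(S))=Q^T\diag\bigl(f(e^{x_1},e^{x_2},e^{x_3})\bigr)Q=Q^T\diag\bigl(\widetilde f(x_1,x_2,x_3)\bigr)Q=\widetilde\Sigma_{\widetilde f}(S).
\end{align*}
In particular $\widetilde\Sigma_{\widetilde f}$, being the composition of the $C^1$ map $\Sigma_f$ with the smooth map $\exp$, is $C^1$ on ${\rm Sym}(3)$, as already asserted.

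Next I would check that $\widetilde\Sigma_{\widetilde f}$ satisfies the two hypotheses of Proposition~\ref{invprop}. Hypothesis~(1) there asks for ${\rm D}\widetilde f\,(x_1,x_2,x_3)$ to be invertible for every $(x_1,x_2,x_3)\in\R^3$, and ${\rm D}\widetilde f={\rm D}(f\circ\exp)$ is invertible by hypothesis~(1) of the Corollary. Hypothesis~(2) there asks for $\|\widetilde f\,(x_1,x_2,x_3)\|_{\R^3}\to\infty$ as $\|(x_1,x_2,x_3)\|_{\R^3}\to\infty$, which is hypothesis~(2) of the Corollary verbatim, since $\widetilde f=f\circ\exp$. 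Proposition~\ref{invprop} then yields that $\widetilde\Sigma_{\widetilde f}\col{\rm Sym}(3)\to{\rm Sym}(3)$ is a global diffeomorphism. Finally, by the intertwining identity, $\Sigma_f=\widetilde\Sigma_{\widetilde f}\circ\exp^{-1}=\widetilde\Sigma_{\widetilde f}\circ\log$ on ${\rm Sym}^{++}(3)$, a composition of the diffeomorphism $\log\col{\rm Sym}^{++}(3)\to{\rm Sym}(3)$ with the diffeomorphism $\widetilde\Sigma_{\widetilde f}$; hence $\Sigma_f\col{\rm Sym}^{++}(3)\to{\rm Sym}(3)$ is a global diffeomorphism, as claimed.

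There is no deep obstacle here: the entire analytic substance is already contained in Katriel's global homeomorphism theorem behind Proposition~\ref{invprop}, and the Corollary is a clean change-of-variables wrapper around it. The only points that need genuine (if routine) care are the verification of the intertwining identity --- where one must use that the matrix exponential of a symmetric matrix acts by the scalar exponential on each eigenvalue, so that $\Sigma_f\circ\exp$ really is the isotropic function generated by $f\circ\exp$ --- and the standard fact that $\exp\col{\rm Sym}(3)\to{\rm Sym}^{++}(3)$ is a diffeomorphism (and not merely a homeomorphism), which is what upgrades the conclusion from ``global homeomorphism'' to ``global diffeomorphism''.
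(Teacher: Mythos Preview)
Your proposal is correct and follows essentially the same route as the paper: define $\widetilde f=f\circ\exp$, observe the intertwining $\widetilde\Sigma_{\widetilde f}=\Sigma_f\circ\exp$, apply Proposition~\ref{invprop} to $\widetilde\Sigma_{\widetilde f}$, and then compose with the diffeomorphism $\log\col{\rm Sym}^{++}(3)\to{\rm Sym}(3)$. You supply more detail than the paper does (the explicit verification of the intertwining identity, the permutation symmetry of $\widetilde f$, and a justification that the matrix exponential is a diffeomorphism on $\Sym(3)$), but the argument is the same.
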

\begin{proof}
	Let us consider $\widetilde{\Sigma}_{\widetilde{f}}:{\rm Sym}(3)\to{\rm Sym}(3)$ defined by
	\begin{align}
	\widetilde{\Sigma}_{\widetilde{f}}(S):=(\Sigma_f\circ \exp)(S) \quad \forall\ S\in \Sym(3)\qquad \Longleftrightarrow\qquad  \widetilde{\Sigma}_{\widetilde{f}}(\log U):=\Sigma_f(U) \quad \forall\ U\in \Sym^{++}(3),
	\end{align}
	where $\log U=\sum\limits_{i=1}^3 \log \lambda_i\,  N_i\otimes N_i,$ with $N_i$  the eigenvectors of $U$ and $\lambda_i$  the eigenvalues of
	$U$, is the Hencky strain tensor \cite{Neff_Osterbrink_Martin_hencky13,NeffGhibaLankeit}. The  function $\widetilde{f}$ defining $\widetilde{\Sigma}_{\widetilde{f}}$ is $\widetilde{f}:=f\circ \exp:\mathbb{R}^3\to \mathbb{R}^3$.
	Now,  Katriel's result applied to $\widetilde{\Sigma}_{\widetilde{f}}$ shows that if ${\rm D}\widetilde{f}\,(x_1,x_2,x_3)$ is invertible for any $(x_1,x_2,x_3)\in \mathbb{R}^3$ and $\|\widetilde{f}\,(x_1,x_2,x_3)\|_{\mathbb{R}^3}\to \infty$ as $\|(x_1,x_2,x_3)\|_{\mathbb{R}^3}\to \infty$, then $U\mapsto \widetilde{\Sigma}_{\widetilde{f}}(U)$   is a global diffeomorphism from ${\rm Sym}(3)$ to ${\rm Sym}(3)$. Then, since the matrix logarithm function $\log:{\rm Sym}^{++}(3)\to {\rm Sym}(3)$ is a global diffeomorphism, $U\mapsto \Sigma_f(U)=(\widetilde{\Sigma}_{\widetilde{f}}\circ \log)( U)$ must be a global diffeomorphism as well.
\end{proof}

\begin{corollary}\label{corinvprop1}
	Assume that $\Sigma_f:{\rm Sym}^{++}(3)\to{\rm Sym}(3)$ is an isotropic $C^1$-function such that\begin{enumerate}
		\item ${\rm D}f\,(\lambda_1,\lambda_2,\lambda_3)$  is invertible for any $(\lambda_1,\lambda_2,\lambda_3)\in \mathbb{R}^3_+$;
		\item $\|f(\lambda_1,\lambda_2,\lambda_3)\|_{\mathbb{R}^3}\to \infty$ as  $\|(\log\lambda_1,\log\lambda_2,\log\lambda_3)\|_{\mathbb{R}^3}\to \infty$.
	\end{enumerate}
	Then $U\mapsto \Sigma_f(U)$ is a global diffeomorphism from ${\rm Sym}^{++}(3)$ to ${\rm Sym}(3)$.
\end{corollary}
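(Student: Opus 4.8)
The plan is to deduce Corollary~\ref{corinvprop1} directly from Corollary~\ref{corinvprop} by rewriting both of its hypotheses in the variables $x_i=\log\lambda_i$. Concretely, I would set $\widetilde{f}:=f\circ\exp\col\mathbb{R}^3\to\mathbb{R}^3$, exactly as in the proof of Corollary~\ref{corinvprop}, and verify that conditions 1 and 2 of Corollary~\ref{corinvprop1} imply conditions 1 and 2 of Corollary~\ref{corinvprop}; the conclusion then follows verbatim from the latter.

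For the derivative condition I would invoke the chain rule: for every $x=(x_1,x_2,x_3)\in\mathbb{R}^3$,
\[
{\rm D}(f\circ\exp)(x) = {\rm D}f(\exp x)\cdot{\rm D}\exp(x) = {\rm D}f(e^{x_1},e^{x_2},e^{x_3})\cdot\diag(e^{x_1},e^{x_2},e^{x_3}).
\]
The diagonal factor $\diag(e^{x_1},e^{x_2},e^{x_3})$ is invertible for every $x$, and as $x$ ranges over $\mathbb{R}^3$ the argument $(e^{x_1},e^{x_2},e^{x_3})$ ranges over all of $\mathbb{R}^3_+$; hence condition 1 of Corollary~\ref{corinvprop1} (invertibility of ${\rm D}f$ on $\mathbb{R}^3_+$) is exactly equivalent to invertibility of ${\rm D}(f\circ\exp)$ on $\mathbb{R}^3$, which is condition 1 of Corollary~\ref{corinvprop}.

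For the coercivity condition I would change variables $\lambda_i=e^{x_i}$, i.e. $x_i=\log\lambda_i$, which is a homeomorphism of $\mathbb{R}^3$ onto $\mathbb{R}^3_+$. Under this substitution $\|(x_1,x_2,x_3)\|_{\mathbb{R}^3}=\|(\log\lambda_1,\log\lambda_2,\log\lambda_3)\|_{\mathbb{R}^3}$ and $(f\circ\exp)(x)=f(\lambda_1,\lambda_2,\lambda_3)$, so the hypothesis that $\|f(\lambda_1,\lambda_2,\lambda_3)\|_{\mathbb{R}^3}\to\infty$ as $\|(\log\lambda_1,\log\lambda_2,\log\lambda_3)\|_{\mathbb{R}^3}\to\infty$ is literally the statement that $\|(f\circ\exp)(x)\|_{\mathbb{R}^3}\to\infty$ as $\|x\|_{\mathbb{R}^3}\to\infty$, which is condition 2 of Corollary~\ref{corinvprop}. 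Applying Corollary~\ref{corinvprop} then gives that $U\mapsto\Sigma_f(U)$ is a global diffeomorphism from ${\rm Sym}^{++}(3)$ to ${\rm Sym}(3)$, as claimed.

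There is essentially no obstacle here: the only points requiring a word of care are that $\exp$ is a bijection $\mathbb{R}^3\to\mathbb{R}^3_+$ with everywhere-invertible derivative — so that condition 1 translates in both directions rather than one — and that $C^1$-regularity of $\Sigma_f$, hence of $f$, is preserved under composition with $\exp$, so that the hypotheses of Corollary~\ref{corinvprop} applied to $f\circ\exp$ are meaningful. Both are immediate, and the corollary is thus merely a reformulation of Corollary~\ref{corinvprop} adapted to the principal-stretch variables $(\lambda_1,\lambda_2,\lambda_3)$ that enter naturally through \eqref{BT}.
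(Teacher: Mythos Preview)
Your proposal is correct and follows essentially the same approach as the paper's own proof: reduce to Corollary~\ref{corinvprop} via the substitution $x_i=\log\lambda_i$, using the chain rule and invertibility of ${\rm D}\exp$ for condition~1 and the bijection $\exp\col\mathbb{R}^3\to\mathbb{R}^3_+$ for condition~2. Your write-up is in fact slightly more explicit than the paper's (you display the diagonal Jacobian factor), but the strategy is identical.
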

\begin{proof}

	First, by using the chain rule and the invertibility of $D\exp\ $, we observe that the assumption that ${\rm D}f\,(\lambda_1,\lambda_2,\lambda_3)$  is invertible for any $(\lambda_1,\lambda_2,\lambda_3)\in \mathbb{R}^3_+$ is equivalent to the invertibility of ${\rm D}(f\circ \exp)\,(x_1,x_2,x_3)$  for any $(x_1,x_2,x_3)\in \mathbb{R}^3$.

Consider now the condition $\|(x_1,x_2,x_3)\|_{\mathbb{R}^3}\to \infty$. We will prove that under assumption 2.\ in the Corollary, it follows that $\|(f\circ \exp)\,(x_1,x_2,x_3)\|_{\mathbb{R}^3}\to \infty$. Indeed, let $(\lambda_1,\lambda_2,\lambda_3)\in \mathbb{R}^3_+$ be such that $x_i=\log \lambda_i$ for $i=1,2,3$. Then, $\|(\log \lambda_1,\log \lambda_2,\log \lambda_3)\|_{\mathbb{R}^3}\to \infty$. From 2., this implies that 
	 $\|f(\lambda_1,\lambda_2,\lambda_3)\|=\|(f\circ \exp)(x_1,x_2,x_3)\|\to \infty$.
	 
	 Therefore, the requirements of Corollary \ref{corinvprop} are satisfied, and this implies that  $U\mapsto \Sigma_f(U)$ is a global diffeomorphism from ${\rm Sym}^{++}(3)$ to ${\rm Sym}(3)$.
\end{proof}

\subsection{Hilbert-monotonicity }\label{ips2}

For our purposes, we now recall some related notions of monotonicity.

\begin{definition}\cite{NeffMartin14}
A tensor function $\Sigma_f\col{\rm Sym}^{++}(3)\to\Symn\,$ is called  \textbf{strictly  Hilbert-monotone} if
\begin{align}
\iprod{\Sigma_f(U)-\Sigma_f(\overline U),\,U-\overline U}_{\R^{3\times 3}}>0\qquad\forall\,U\neq\overline U\in{\rm Sym}^{++}(3)\,.\label{eq:introductionMatrixMonotonicity}
\end{align}
We refer to this inequality as \textbf{strict Hilbert-space matrix-monotonicity} of the tensor function $\Sigma_f$. A tensor function $\Sigma_f\col{\rm Sym}^{++}(3)\to\Symn\,$ is called  \textbf{Hilbert-monotone} if
\begin{align}
\iprod{\Sigma_f(U)-\Sigma_f(\overline U),\,U-\overline U}_{\R^{3\times 3}}\geq 0\qquad\forall\,U,\,\overline U\in{\rm Sym}^{++}(3)\,.\label{eq:introductionMatrixMonotonicity1}
\end{align}
\end{definition}
\begin{definition}\cite{NeffMartin14}
 A vector function  $f\col\R^3_+\to\R^3$ is \textbf{strictly vector monotone} if 
\begin{align}
\iprod{f(\lambda)-f(\overline\lambda),\,\lambda-\overline\lambda}_{\R^3}>0\qquad\forall\lambda\neq\overline\lambda\in\R^3_+,
\end{align}
and it is is   \textbf{vector monotone} if 
\begin{align}
\iprod{f(\lambda)-f(\overline\lambda),\,\lambda-\overline\lambda}_{\R^3}\geq 0\qquad\forall\lambda,\lambda\in\R^3_+,
\end{align}
\end{definition} 
\begin{definition}
	A continuously differentiable tensor function $\Sigma_f\col{\rm Sym}^{++}(3)\to\Symn\,$ is called  \textbf{strongly Hilbert-monotone} if
	\[
		\iprod{{\rm D}\Sigma_f.H(U),H} > 0 \qquad\text{for all }\;{U\in\rm Sym}^{++}(3),\;H\in{\rm Sym}(3)
		\,.
	\]
\end{definition}

\begin{definition}\cite{NeffMartin14}
	A continuously differentiable vector function  $f\col\R^3_+\to\R^3$ is called \textbf{strongly vector monotone} if
	\[
		\iprod{{\rm D}f(\lambda).h,h} > 0 \qquad\text{for all }\;\lambda\in\R^3_+,\;h\in\R^3
		\,.
	\]
\end{definition}


\noindent Note that ${\rm D}f\,(\lambda_1,\lambda_2,\lambda_3)$ in itself might not be symmetric. However, for $T_i= \dd\frac{\partial g(\lambda_1,\lambda_2,\lambda_3)}{\partial \lambda_i},\ i=1,2,3 $,
\begin{align}
{\rm D}\widehat{T}(\lambda_1,\lambda_2, \lambda_3)={\rm D}^2 g(\lambda_1,\lambda_2, \lambda_3):=\left(\frac{\partial^2 g}{\partial \lambda_i\partial \lambda_j}\right)_{i,j=1,2,3}\in \Sym(3).
\end{align}

\noindent In a forthcoming paper \cite{MartinVossGhibaNeff}, we discuss the following result, thereby expanding on Ogden's work \cite[last page in the Appendix]{Ogden83}, following Hill's seminal contributions  \cite{hill1968constitutivea,hill1968constitutiveb,hill1970constitutive}: 
\begin{theorem}
	\label{theorem:mainResult}
	A sufficiently regular symmetric function $f\col\R^3_+\to\R^3$ is (strictly/strongly) vector-monotone if and only if $\Sigma_f$ is (strictly/strongly) matrix-monotone.
\end{theorem}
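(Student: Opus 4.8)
The statement bundles three equivalences — for plain, strict and strong monotonicity — and in each case the implication ``$\Sigma_f$ matrix-monotone $\Rightarrow$ $f$ vector-monotone'' is the trivial one: restricting to diagonal stretches $U=\diag(\lambda)$, $\overline U=\diag(\overline\lambda)$ one has $\Sigma_f(U)=\diag(f_1(\lambda),f_2(\lambda),f_3(\lambda))$, so that $\iprod{\Sigma_f(U)-\Sigma_f(\overline U),\,U-\overline U}_{\R^{3\times3}}=\iprod{f(\lambda)-f(\overline\lambda),\,\lambda-\overline\lambda}_{\R^3}$, and for diagonal increments $H=\diag(h)$ the off-diagonal part of $\D\Sigma_f$ does not contribute, giving $\iprod{\D\Sigma_f(\diag\lambda).H,H}=\iprod{\D f(\lambda).h,h}$. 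So the whole content lies in the converses, i.e.\ in passing from the behaviour on commuting stretches to arbitrary ones.

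For the converse in the \textbf{plain} case, fix $U,\overline U\in\mathrm{Sym}^{++}(3)$. By frame-indifference of $\Sigma_f$ (cf.\ \eqref{eq:introductionMatrixFunction}) and conjugation-invariance of the Frobenius inner product we may assume $\overline U=\diag(\overline\lambda)$ and $U=Q^{T}\diag(\lambda)Q$ for some $Q\in\OO(3)$. Writing $P:=(Q_{ik}^{2})_{i,k}$, which is doubly stochastic because the rows and columns of $Q$ are orthonormal, a direct expansion together with $\sum_k P_{ik}=\sum_i P_{ik}=1$ yields the key identity
\[
\iprod{\Sigma_f(U)-\Sigma_f(\overline U),\,U-\overline U}_{\R^{3\times3}}=\sum_{i,k=1}^{3}P_{ik}\,\bigl(f_i(\lambda)-f_k(\overline\lambda)\bigr)\bigl(\lambda_i-\overline\lambda_k\bigr).
\]
By the Birkhoff--von Neumann theorem $P=\sum_{\sigma}c_\sigma P_\sigma$ is a convex combination of permutation matrices; substituting and using the permutation symmetry of $f$ in the form $f_{\sigma(i)}(\overline\lambda)=f_i(\overline\lambda_{\sigma(1)},\overline\lambda_{\sigma(2)},\overline\lambda_{\sigma(3)})$ recasts the right-hand side as
\[
\sum_{\sigma}c_\sigma\,\iprod{f(\lambda)-f(\overline\lambda^{\sigma}),\,\lambda-\overline\lambda^{\sigma}}_{\R^3},\qquad\overline\lambda^{\sigma}:=(\overline\lambda_{\sigma(1)},\overline\lambda_{\sigma(2)},\overline\lambda_{\sigma(3)})\in\R^3_+.
\]
Each summand is $\ge 0$ by vector-monotonicity of $f$, which proves that $\Sigma_f$ is matrix-monotone.

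For the \textbf{strict} case, assume in addition that $f$ is strictly vector-monotone and that the left-hand side above vanishes. Then for every $\sigma$ with $c_\sigma>0$ we get $\iprod{f(\lambda)-f(\overline\lambda^{\sigma}),\lambda-\overline\lambda^{\sigma}}_{\R^3}=0$, hence $\lambda=\overline\lambda^{\sigma}$; therefore $P_{ik}>0$ (equivalently $Q_{ik}\ne0$) forces $\lambda_i=\overline\lambda_k$. Substituting this into $(Q\overline U Q^{T})_{ij}=\sum_k Q_{ik}Q_{jk}\,\overline\lambda_k$ and checking entrywise (splitting the cases $\lambda_i=\lambda_j$ and $\lambda_i\neq\lambda_j$) gives $Q\overline U Q^{T}=\diag(\lambda)=QUQ^{T}$, i.e.\ $U=\overline U$; thus $\Sigma_f$ is strictly matrix-monotone. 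For the \textbf{strong} case — where the hypothesis ``sufficiently regular'' enters, $f\in C^1$ — I would use the classical formula for the Fr\'echet derivative of an isotropic tensor function: at $U=\diag(\lambda)$ and for $H\in\Sym(3)$,
\[
\iprod{\D\Sigma_f(\diag\lambda).H,\,H}=\iprod{\D f(\lambda).h,\,h}+\sum_{i\ne j}\frac{f_i(\lambda)-f_j(\lambda)}{\lambda_i-\lambda_j}\,H_{ij}^{2},\qquad h:=(H_{11},H_{22},H_{33}),
\]
the quotient $\tfrac{f_i-f_j}{\lambda_i-\lambda_j}$ to be read as $\partial_{\lambda_i}f_i(\lambda)-\partial_{\lambda_j}f_i(\lambda)$ when $\lambda_i=\lambda_j$. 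Testing strong vector-monotonicity of $f$ against $h=e_i-e_j$, and using that permutation symmetry forces $\partial_{\lambda_i}f_i=\partial_{\lambda_j}f_j$ and $\partial_{\lambda_j}f_i=\partial_{\lambda_i}f_j$ at points with $\lambda_i=\lambda_j$, shows each coefficient $\tfrac{f_i-f_j}{\lambda_i-\lambda_j}$ is strictly positive; combined with $\iprod{\D f(\lambda).h,h}>0$ for $h\neq0$ this makes the right-hand side strictly positive for every $H\neq0$ (if $h=0$ some $H_{ij}\neq0$ and the second sum alone is positive). Conjugating by $Q$ transfers positivity of $\D\Sigma_f$ to arbitrary $U\in\mathrm{Sym}^{++}(3)$, and integrating $\D\Sigma_f$ along the (admissible, since $\mathrm{Sym}^{++}(3)$ is convex) segment $[\overline U,U]$ recovers strict, hence plain, monotonicity as a by-product.

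The \textbf{main obstacle} is precisely this passage from commuting to non-commuting stretches, concentrated in the displayed doubly-stochastic identity and in the observation — via Birkhoff--von Neumann together with the permutation symmetry of $f$ — that the matrix pairing is literally a convex average of vector pairings over permuted arguments. Once that structural identity is in place, all three equivalences follow; the only further care needed is the rigidity step $P_{ik}>0\Rightarrow\lambda_i=\overline\lambda_k\Rightarrow U=\overline U$ in the strict case, and the coincident-eigenvalue bookkeeping in the derivative formula for the strong case.
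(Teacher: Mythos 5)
The paper does not actually prove Theorem~\ref{theorem:mainResult}: it states the result and explicitly defers the proof to the forthcoming work \cite{MartinVossGhibaNeff}, referring to Ogden \cite{Ogden83} and Hill \cite{hill1968constitutivea,hill1968constitutiveb,hill1970constitutive} for the underlying ideas. There is therefore no in-paper proof to compare yours against; what follows is an assessment of your argument on its own terms.

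Your proof follows the classical Hill--Ogden route and is, in substance, correct. The reduction $\overline U=\diag(\overline\lambda)$, $U=Q^T\diag(\lambda)Q$, the identity
\[
\iprod{\Sigma_f(U)-\Sigma_f(\overline U),\,U-\overline U}
=\sum_{i,k}P_{ik}\bigl(f_i(\lambda)-f_k(\overline\lambda)\bigr)\bigl(\lambda_i-\overline\lambda_k\bigr),
\qquad P_{ik}=Q_{ik}^{\,2},
\]
Birkhoff--von Neumann, and the permutation symmetry $f_{\sigma(i)}(\overline\lambda)=f_i(\overline\lambda^\sigma)$ that converts the sum into a convex combination of vector pairings are exactly the right mechanism; the rigidity step $P_{ik}>0\Rightarrow\lambda_i=\overline\lambda_k\Rightarrow U=\overline U$ for the strict case, via the entrywise check of $Q\overline U Q^T$, is also sound. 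The one place where you should tighten the exposition is the strong case: you argue that ``testing strong vector-monotonicity against $h=e_i-e_j$'' together with the symmetry relations $\partial_{\lambda_i}f_i=\partial_{\lambda_j}f_j$, $\partial_{\lambda_j}f_i=\partial_{\lambda_i}f_j$ gives positivity of the off-diagonal coefficient $\tfrac{f_i(\lambda)-f_j(\lambda)}{\lambda_i-\lambda_j}$. As written this only justifies the coincident-eigenvalue limit; for $\lambda_i\neq\lambda_j$ the pointwise inequality $\iprod{\D f(\lambda).h,h}>0$ says nothing directly about a finite difference quotient. You need one additional step: either integrate $\D f$ along the segment from $\lambda$ to the swapped point $\lambda^{(ij)}$ (admissible, since $\R^3_+$ is convex and the tangent along that segment is a multiple of $e_i-e_j$), or first note that strong vector-monotonicity implies strict vector-monotonicity by the fundamental theorem of calculus and then apply strict monotonicity to the pair $(\lambda,\lambda^{(ij)})$, which yields $2\bigl(f_i(\lambda)-f_j(\lambda)\bigr)\bigl(\lambda_i-\lambda_j\bigr)>0$. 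With that inserted, the argument for the strong case closes cleanly, and the rest of the proof stands.
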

\noindent Hence, the following  holds true for hyperelasticity, assuming sufficient regularity:
\begin{align*}
U\mapsto T_{\rm Biot}(U) \ \ \text{Hilbert-monotone}\quad &\Longleftrightarrow\quad (\lambda_1,\lambda_2,\lambda_3)\mapsto \widehat{T}(\lambda_1,\lambda_2, \lambda_3) \ \ \text{vector monotone}\notag\\&\Longleftrightarrow\quad {\rm D}\widehat{T}(\lambda_1,\lambda_2, \lambda_3)\in {\rm Sym}^{+}(3)\quad\forall(\lambda_1,\lambda_2, \lambda_3)\in\mathbb{R}_+^3,
\\[.7em]
U\mapsto T_{\rm Biot}(U) \ \ \text{strictly Hilbert-monotone}\quad &\Longleftrightarrow\quad (\lambda_1,\lambda_2,\lambda_3)\mapsto \widehat{T}(\lambda_1,\lambda_2, \lambda_3) \ \ \text{strictly vector monotone,}\notag
\\[.7em]
U\mapsto T_{\rm Biot}(U) \ \ \text{strongly Hilbert-monotone}\quad &\Longleftrightarrow\quad (\lambda_1,\lambda_2,\lambda_3)\mapsto \widehat{T}(\lambda_1,\lambda_2, \lambda_3) \ \ \text{strongly vector monotone}\notag\\&\Longleftrightarrow\quad {\rm D}\widehat{T}(\lambda_1,\lambda_2, \lambda_3)\in {\rm Sym}^{++}(3)\quad\forall(\lambda_1,\lambda_2, \lambda_3)\in\mathbb{R}_+^3.
\end{align*}
Note that the monotonicity conditions and the invertibility condition are global conditions. Conversely, the conditions $ \det {\rm D}f\,(\lambda_1,\lambda_2,\lambda_3)\neq 0$ -- which is equivalent to $f$ being a local diffeomorphism -- as well as ${\rm D}\widehat{T}\in {\rm Sym}^{++}(3)$ are only local conditions.


\subsection{Energetic stability}\label{ips3}


In the following, we employ the stability criterion
\begin{align}\label{ssc1}
\langle {\rm D}^2_{F}W(F)\, .H,H\rangle\geq 0 \qquad \text{for all} \qquad H\in \mathbb{R}^{3\times 3}
\end{align}
for the hyperelastic energy potential $W$, which ensures material stability under so-called soft loads \cite{chen1995stability}.
In terms of the singular values, the condition \eqref{ssc1} holds at $F\in\GLp(3)$ if and only if \cite{chen1987stability}
\begin{align}\label{staine1}
\frac{\frac{\partial g}{\partial \lambda_i}-\epsilon_{ij} \,\frac{\partial g}{\partial \lambda_j}}{\lambda_i-\epsilon_{ij} \,\lambda_j}\Big|_{\lambda_i=\lambda_i^*}\geq 0 \qquad\text{holds for all}\; i,j=1,2,3,  \ \ i\neq j \ \ \text{(no sum)}
\end{align}
and the Hessian matrix of $g$, i.e., ${\rm D}^2 g=\left(\frac{\partial^2 g}{\partial \lambda_i\partial \lambda_j}\right)\Big|_{\lambda_i=\lambda_i^*}
$
is positive semi-definite, where \begin{align}\epsilon_{ij}=\begin{cases}
\ \ \,1& \text{if}\ \ \{i,j\}=\{1,2\} \ \ \text{or}\ \ \{2,3\}\ \ \text{or}\ \ \{3,1\},\\
-1& \text{otherwise}
\end{cases}\end{align}
and $\lambda_i^*$ are the singular values of $F$.
If two singular values $\lambda^*_i$ and $\lambda^*_j$, $i\neq j$, are equal, the inequalities in \eqref{staine1} are interpreted in terms of their limits $\lambda^*_i\to \lambda^*_j$;
for instance, in the points $(\lambda^*_1,\lambda^*_1,\lambda^*_3)=(\lambda^*,\lambda^*,\lambda^*_3)$ with $\lambda_3^*\neq\lambda^*$, the energetic stability criterion \eqref{ssc1} is satisfied if and only if
\begin{align}\label{snr}
\left(\frac{\partial^2 g}{\partial \lambda_1^2}-\frac{\partial^2 g}{\partial \lambda_1\partial \lambda_2}\right)\Big|_{\lambda_1=\lambda_2=\lambda^*,\lambda_3=\lambda^*_3}&\geq 0,\qquad 
\left(\frac{\frac{\partial g}{\partial \lambda_2}-\frac{\partial g}{\partial \lambda_3}}{\lambda_2-\lambda_3}\right)\Big|_{\lambda_1=\lambda_2=\lambda^*,\lambda_3=\lambda^*_3}\geq 0,\\ \left(\frac{\frac{\partial g}{\partial \lambda_2}+\frac{\partial g}{\partial \lambda_1}}{2\lambda_1}\right)\Big|_{\lambda_1=\lambda_2=\lambda^*,\lambda_3=\lambda^*_3}&\geq 0,\qquad 
\left(\frac{\frac{\partial g}{\partial \lambda_2}+\frac{\partial g}{\partial \lambda_3}}{\lambda_2+\lambda_3}\right)\Big|_{\lambda_1=\lambda_2=\lambda^*,\lambda_3=\lambda^*_3}\geq 0,\notag
\end{align}
and the Hessian matrix ${\rm D}^2 g=\left(\frac{\partial^2 g}{\partial \lambda_i\partial \lambda_j}\right)\Big|_{\lambda_1=\lambda_2=\lambda^*,\lambda_3=\lambda^*_3}
$ is positive semi-definite.

We also remark that the positive semi-definiteness of the Hessian matrix of $g$ is equivalent to the positive semi-definiteness of $
{\rm D}\widehat{T}(\lambda_1,\lambda_2, \lambda_3)={\rm D}^2 g(\lambda_1,\lambda_2, \lambda_3).
$ Since the stability implies the positive semi-definiteness of $
{\rm D}^2 g(\lambda_1,\lambda_2, \lambda_3)={\rm D}\widehat{T}(\lambda_1,\lambda_2, \lambda_3)
$, the stability implies the monotonicity of $
{\rm D}\widehat{T}(\lambda_1,\lambda_2, \lambda_3)
$.

\section{Invertibility and monotonicity of the Biot stress-stretch relation for the compressible Neo-Hooke-Ciarlet-Geymonat energy}\setcounter{equation}{0}
In the following, we will reduce the Neo-Hooke-Ciarlet-Geymonat energy to its one-parameter version
\begin{align}\label{CGMe}
W^M_{\rm CG} (F)=\frac{1}{\mu}\,W_{\rm CG} (F)=\frac{1}{2}\|F\|^2+\left[-\,\log \det F+\left(\frac{M}{4}-\frac{1}{6}\right)((\det F)^2-2\, \log \det F-1)\right], 
\end{align}
\text{with} $M:=\frac{\lambda+\frac{2\,\mu}{3}}{\mu}>\frac{2}{3}$.
All the stresses considered in the following will be related to this one parameter energy. 
In terms of the singular values, $W^M_{\rm CG}$ admits the representation
\begin{align}
W^M_{\rm CG} (F)=g(\lambda_1,\lambda_2,\lambda_3)=
\frac{1}{2} \Big[&\frac{1}{6} (3 M-2) \left(\lambda_1^2 \lambda_2^2 \lambda_3^2-2 \log (\lambda_1 \lambda_2 \lambda_3)-1\right)-2 \log (\lambda_1 \lambda_2 \lambda_3)+\lambda_1^2+\lambda_2^2+\lambda_3^2\Big].
\end{align}
The corresponding first Piola-Kirchhoff stress tensor is given by
\begin{align}
S_1 &= F +\frac{1}{\mu}\, (h_{\rm CG}^M)^\prime(\det F)\cdot {\rm Cof} F= F +\left[ \left(\frac{M}{2}-\frac{1}{3}\right) \left( \det F-\frac{1}{\det F}\right)-\frac{1}{\det F}\right]\cdot {\rm Cof} F, 
\end{align}
where
\begin{align}
h_{\rm CG}^M:=\frac{1}{\mu }h_{\rm CG}=-\log x+\frac{\lambda}{4\,\mu }(x^2-2\, \log x-1).
\end{align}
The Biot stress tensor defined by $W^M_{\rm CG} (F)$ is
\begin{align}
T_{\rm Biot} (U)={\rm D}_U W^M_{\rm CG} (U)=R^T S_1&=U +\frac{1}{\mu } \, (h_{\rm CG}^M)^\prime(\det F)(\det U)\cdot \det U\cdot U^{-1}\notag\\&=U + \,\left[ \left(\frac{M}{2}-\frac{1}{3}\right) \left( \det U-\frac{1}{\det U}\right)-\frac{1}{\det U}\right]\cdot \det U\cdot U^{-1},
\end{align}
while the principal Biot stresses are given by
\begin{align}\label{pbs}
T_1&=\lambda_1-\frac{1}{\lambda_1}+\left(\frac{M}{2}-\frac{1}{3}\right) \left( \lambda_1 \lambda_2^2 \lambda_3^2-\frac{1}{\lambda_1}\right), \quad 
T_2=\lambda_2-\frac{1}{\lambda_2}+\left(\frac{M}{2}-\frac{1}{3}\right) \left( \lambda_1^2 \lambda_2 \lambda_3^2-\frac{1}{\lambda_2}\right),\\
T_3&=\lambda_3-\frac{1}{\lambda_3}+\left(\frac{M}{2}-\frac{1}{3}\right) \left( \lambda_1^2 \lambda_2^2 \lambda_3-\frac{1 }{\lambda_3}\right).\notag
\end{align}
We compute
\begin{align}
{\rm D}\widehat{T}=\left(
\begin{array}{ccc}
\frac{1}{\lambda_1 ^2}+1 +\left(\frac{M}{2}-\frac{1}{3}\right) \left(\frac{1}{\lambda_1 ^2}+\lambda_2^2 \lambda_3^2\right)& 2 \left(\frac{M}{2}-\frac{1}{3}\right) \lambda_1  \lambda_2 \lambda_3^2 & 2 \left(\frac{M}{2}-\frac{1}{3}\right) \lambda_1  \lambda_2^2 \lambda_3 \\
2 \left(\frac{M}{2}-\frac{1}{3}\right) \lambda_1  \lambda_2 \lambda_3^2 & \frac{1}{\lambda_2^2}+1+\left(\frac{M}{2}-\frac{1}{3}\right) \left(\lambda_1 ^2 \lambda_3^2+\frac{1}{\lambda_2^2}\right) & 2 \left(\frac{M}{2}-\frac{1}{3}\right) \lambda_1 ^2 \lambda_2 \lambda_3 \\
2 \left(\frac{M}{2}-\frac{1}{3}\right) \lambda_1  \lambda_2^2 \lambda_3 & 2 \left(\frac{M}{2}-\frac{1}{3}\right) \lambda_1 ^2 \lambda_2 \lambda_3 & \frac{1}{\lambda_3^2}+1+\left(\frac{M}{2}-\frac{1}{3}\right) \left(\lambda_1 ^2 \lambda_2^2+\frac{1}{\lambda_3^2}\right) \\
\end{array}
\right)
\end{align}
and   remark that
\begin{align}
\det {\rm D}\widehat{T}(1,1,1)=12\,M,
\end{align}
which is strictly positive for all $M>0$. However, we find that for all $M>\frac{2}{3}$ there exists $(\lambda_1,\lambda_2, \lambda_3)\in \mathbb{R}_+^3$ such that 
\begin{align}
\det {\rm D}\widehat{T}(\lambda_1,\lambda_2, \lambda_3)=0.
\end{align}
A quick numerical check reveals that the Biot stress-stretch relation is in general not invertible, see Figure \ref{3Dinv}. However, we may equally show this analytically. Indeed, for each material of the form \eqref{CGMe} with $M>\frac{2}{3}$ we have
\begin{align}
\det {\rm D}\widehat{T}(\lambda_1,\lambda_1, \lambda_1)=\frac{\left[(2-3 \,M) \lambda_1^6+6\, \lambda_1^2+4+3\, M\right]^2 \left[5\, (3 M-2)\, \lambda_1^6+6\, \lambda_1^2+4+3
	\, M\right]}{216\, \lambda_1^6},
\end{align}
and therefore, $U\mapsto T_{\rm Biot}(U)$ loses differentiable invertibility in $U=\lambda^*\, \id$, where  $\lambda_1$ is a solution of the equation (see Figure \ref{xm})
\begin{align}\label{losti}
(-3\, M+2)\, \lambda_1^6+6\, \lambda_1^2+4+3\, M=0. 
\end{align}

In Figure \ref{xm}, for fixed $M$, the solution is the intersection of the red line to the blue curve.  However, the analytical proof of the existence and uniqueness of the solution $\lambda^*$ of \eqref{losti} is also possible.

\restylefloat*{figure}

\begin{figure}[!h]
	\centering
	\begin{minipage}{.45\textwidth}
		\includegraphics[scale=0.5]{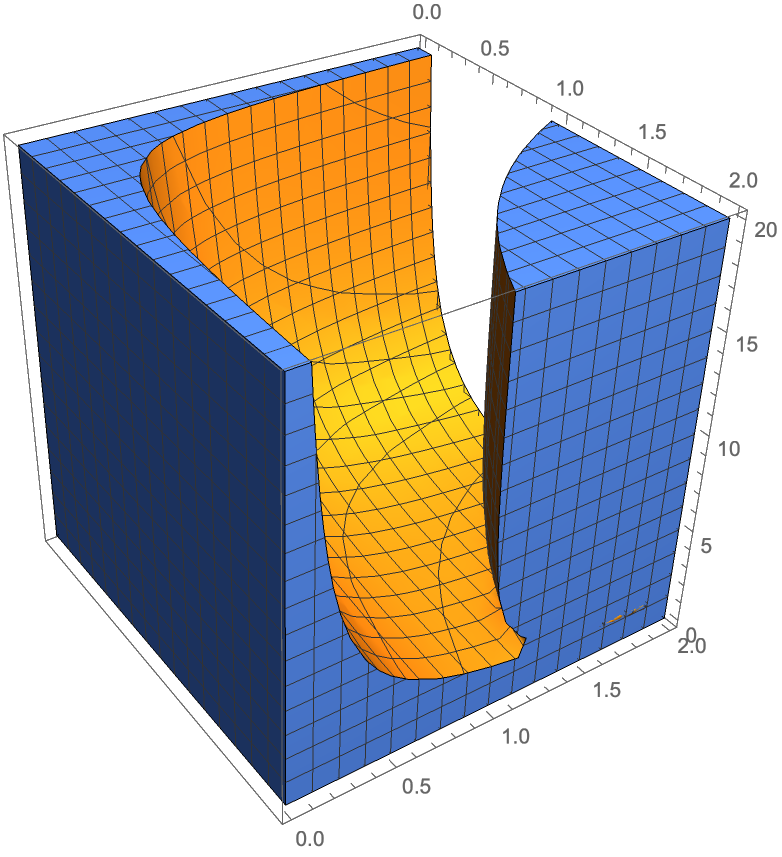}
		\caption{\footnotesize{For $M=1$, the region of those $(\lambda_1,\lambda_2,\lambda_3)$ for which $\det {\rm D}\widehat{T}(\lambda_1,\lambda_2, \lambda_3)\neq 0$.}}
		\label{3Dinv}
	\end{minipage}
	\qquad \quad 
	\begin{minipage}{.45\textwidth}
		\centering
		\includegraphics[scale=0.5]{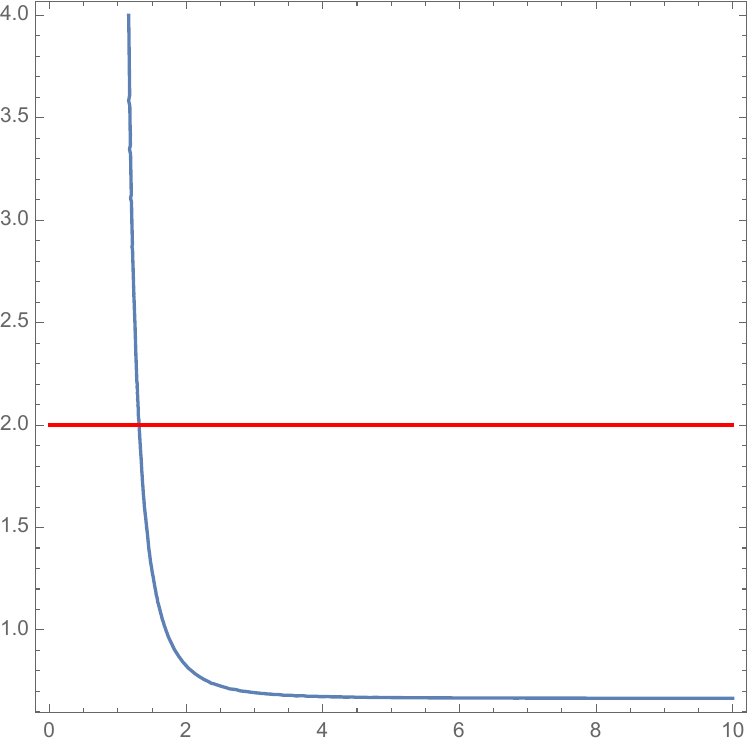}
		\put(0,0){\footnotesize $\lambda_1$}
		\put(-180,185){\footnotesize $M$}
		\put(-130,100){\footnotesize $(2-3 \,M)\, \lambda_1^6+6 \lambda_1^2+4+3 M>0$}
		\caption{\footnotesize{The plot of the pairs $(\lambda_1,M)$ satisfying \break $(2-3 \,M)\, \lambda_1^6+6 \lambda_1^2+4+3 M=0$. For fixed $M = 2$, the unique  solution is the intersection of the red line to the blue curve.}}
		\label{xm}
	\end{minipage}
\end{figure}%

\begin{figure}[!h]
	\centering
	\begin{minipage}{.45\textwidth}
		\includegraphics[scale=0.55]{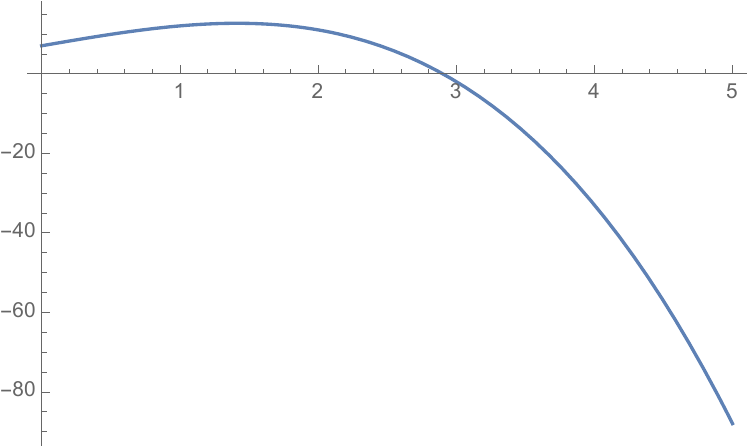}
		\caption{\footnotesize{For $M=1$, the plot of $s:(0,\infty)\to \mathbb{R},$ \\  $s(x)= ( - 3 M+2) x^3+6x+3 M +4$.}}
		\label{proofM1}
	\end{minipage}
	\qquad \quad 
	\begin{minipage}{.45\textwidth}
		\centering
		\includegraphics[scale=0.6]{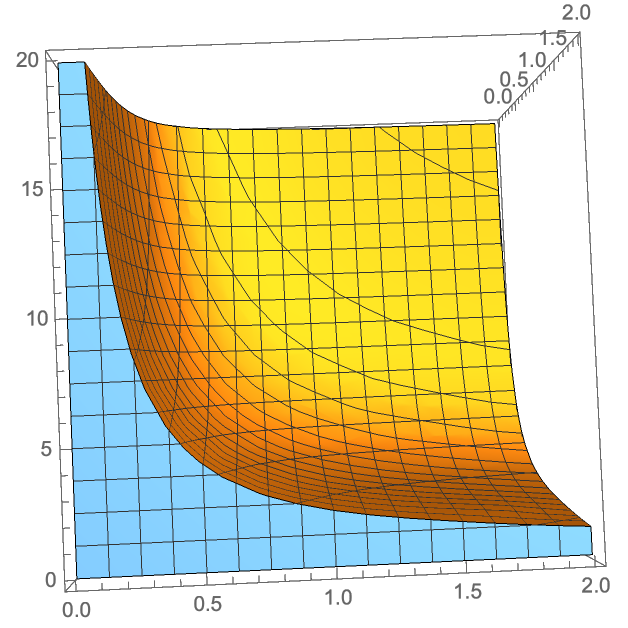}
		\caption{\footnotesize{For $M=1$, the region plot of those $(\lambda_1,\lambda_2,\lambda_3)$ for which the matrix  ${\rm D}\widehat{T}$ is positive definite.}}
		\label{3dm}
	\end{minipage}
	\qquad \quad 
\end{figure}

\begin{proposition}\label{notinv}
	For any $M>\frac{2}{3}$ the Biot stress-stretch relation $U\mapsto T_{\rm Biot}(U)$ for the Neo-Hooke-Ciarlet-Geymonat energy is in general not a diffeomorphism.
\end{proposition}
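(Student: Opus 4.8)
The plan is to show that, for every $M>\tfrac23$, the derivative of $T_{\rm Biot}$ fails to be invertible at a suitable point of ${\rm Sym}^{++}(3)$; since a diffeomorphism must have invertible derivative everywhere, this already yields the assertion (and in fact proves the sharper statement that $T_{\rm Biot}$ is not even a local diffeomorphism near that point). I will take the point to be of the spherical form $U=\lambda^*\,\id$ and probe ${\rm D}T_{\rm Biot}(U)$ along diagonal perturbations: for $U=\diag(\lambda,\lambda,\lambda)$ and $H=\diag(h_1,h_2,h_3)$ one has $T_{\rm Biot}\big(\diag(\lambda+th_1,\lambda+th_2,\lambda+th_3)\big)=\diag\big(T_1,T_2,T_3\big)$ with the $T_i$ of \eqref{pbs}, so differentiating at $t=0$ gives ${\rm D}T_{\rm Biot}(U).H=\diag\big({\rm D}\widehat T(\lambda,\lambda,\lambda).h\big)$. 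Consequently ${\rm D}T_{\rm Biot}(\lambda^*\id)$ is singular whenever $\det{\rm D}\widehat T(\lambda^*,\lambda^*,\lambda^*)=0$, and the problem reduces to producing a positive root of equation \eqref{losti}.

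For that I use the closed-form expression for $\det{\rm D}\widehat T(\lambda_1,\lambda_1,\lambda_1)$ already displayed above: it factors as a squared factor $\big[(2-3M)\lambda_1^6+6\lambda_1^2+4+3M\big]^2$ times $\big[5(3M-2)\lambda_1^6+6\lambda_1^2+4+3M\big]$ (over $216\lambda_1^6$). For $M>\tfrac23$ the second bracket is a sum of strictly positive terms, hence never vanishes; so the determinant vanishes exactly at the zeros of the first bracket, i.e. at the solutions of \eqref{losti}. Substituting $x=\lambda_1^2\in(0,\infty)$ turns \eqref{losti} into the cubic $s(x)=(2-3M)x^3+6x+(3M+4)$ (plotted in Figure~\ref{proofM1}). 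Since $s(0)=3M+4>0$ and, because $2-3M<0$, $s(x)\to-\infty$ as $x\to\infty$, the intermediate value theorem yields a root $x^*\in(0,\infty)$; setting $\lambda^*:=\sqrt{x^*}$ finishes the argument.

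As a by-product I would also record the uniqueness of $\lambda^*$ (used in the sequel and visible in Figures~\ref{xm} and~\ref{proofM1}): $s'(x)=6-3(3M-2)x^2$ is positive on $\big(0,\sqrt{2/(3M-2)}\big)$ and negative afterwards, so $s$ is first strictly increasing and then strictly decreasing on $(0,\infty)$; combined with $s(0)>0$ and $s(+\infty)=-\infty$ this forces exactly one sign change, hence a unique positive root.

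I expect the only subtle point to be the initial reduction, namely that singularity of the $3\times 3$ matrix ${\rm D}\widehat T$ at a triple stretch genuinely obstructs $T_{\rm Biot}$ from being a (local) diffeomorphism on ${\rm Sym}^{++}(3)$. This is settled by the elementary restriction-to-diagonal-directions computation above — or, more structurally, by noting that in the eigenbasis of $U$ the derivative of the isotropic tensor function $T_{\rm Biot}=\Sigma_f$ has ${\rm D}\widehat T$ as its diagonal block, so a kernel vector of ${\rm D}\widehat T$ produces a kernel direction of ${\rm D}T_{\rm Biot}(U)$. Everything after that is routine calculus on a cubic, and the identity $\det{\rm D}\widehat T(1,1,1)=12M>0$ confirms that the failure of invertibility is genuinely a large-strain phenomenon rather than a defect at the reference state.
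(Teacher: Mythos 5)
Your proposal is correct and takes essentially the same route as the paper: both reduce through the displayed factorization of $\det{\rm D}\widehat T(\lambda_1,\lambda_1,\lambda_1)$, substitute $x=\lambda_1^2$ to reach the cubic $s(x)=(2-3M)x^3+6x+3M+4$, and extract a unique positive root from $s(0)>0$, $s(x)\to-\infty$, and the increasing-then-decreasing shape of $s$ (concavity in the paper, sign of $s'$ in yours). Your explicit observations --- that the third factor $5(3M-2)\lambda_1^6+6\lambda_1^2+4+3M$ is strictly positive so the determinant vanishes exactly on the solutions of \eqref{losti}, and that a singular ${\rm D}\widehat T$ at a triple stretch genuinely obstructs local invertibility of $T_{\rm Biot}$ --- are clarifications the paper leaves to Theorem \ref{theorem:invertibility} and the surrounding text.
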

\begin{proof}
Let us consider the function
\begin{align}
s:(0,\infty)\to \mathbb{R}, \qquad s(x)= ( - 3 M+2) x^3+6x+3 M +4.
\end{align} 
Surely, we have 
\begin{align}
s(x^2)=
(-3\, M+2)\, x^6+6\, x^2+4+3\, M.
\end{align}
Therefore, $(-3\, M+2)\, x^6+6\, x^2+4+3\, M=0$ has a  positive solution $\lambda_1$ if and only if $s(x)=0$ has a positive solution. But the function $s$ is concave (see Figure~\ref{proofM1}), since
\begin{align}
s''(x)=6 (2-3 M) x<0 \quad \forall \  x>0 \quad \forall\  M>\frac{2}{3},
\end{align}
and it attains its maximum in the stationary point, i.e., in the solution  of the equation
\begin{align}s'(x)=0\quad \iff\quad  6 + (6 - 9 M) x^2=0\quad\Longleftrightarrow\quad x= \frac{\sqrt{2}}{\sqrt{3 M-2}}>0.
\end{align}
Note that 
\begin{align}
s(0) = 3\, M + 4 > 0, \qquad s\left(\frac{\sqrt{2}}{\sqrt{3 M-2}}\right)=3 M+\frac{4}{\sqrt{\frac{3 M}{2}-1}}+4>0 \quad \text{and}\quad \lim_{x\rightarrow \infty}s(x)=-\infty.
\end{align}
Thus, by the concavity, $s(x)$ remains positive at least until it reaches its maximum $x_0$ and, starting from $x_0$, $s(x)$ is strictly monotone decreasing. Since $s(x)$ is continuous and $\underset{x \to \infty}{\lim} \, s(x) \to - \infty$, there must be exactly one point $\widetilde x$, for which $s(\widetilde x) = 0$ by the intermediate value theorem and the strict monotonicity (starting from $x_0$), meaning that $\widetilde x$ is the unique solution to $s(x) = 0$.
\end{proof}

According to Theorem \ref{theorem:mainResult}, 
strong monotonicity  of the Biot stress-stretch relation for the Neo-Hooke-Ciarlet-Geymonat energy implies the positive semi-definiteness of the matrix ${\rm D}\widehat{T}$. Note however that, being not invertible and symmetric, the matrix  ${\rm D}\widehat{T}$ is also not positive definite everywhere.

Moreover, as visualized for $M=1$ via numerical simulation in Figure \ref{3dm},  the matrix  ${\rm D}\widehat{T}$ is  not positive semi-definite on $\mathbb{R}_+^3$ in general.
\begin{proposition}\label{thmon}
	For the compressible Neo-Hooke-Ciarlet-Geymonat materials,  the Biot stress-stretch relation  is in general not monotone.
\end{proposition}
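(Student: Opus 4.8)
The plan is to reduce everything, via Theorem~\ref{theorem:mainResult} and the chain of equivalences displayed immediately after it, to a single pointwise statement about ${\rm D}\widehat{T}$. Recall that $U\mapsto T_{\rm Biot}(U)$ being Hilbert-monotone is equivalent to ${\rm D}\widehat{T}(\lambda_1,\lambda_2,\lambda_3)\in{\rm Sym}^+(3)$ for \emph{every} $(\lambda_1,\lambda_2,\lambda_3)\in\mathbb{R}_+^3$ (the functions $g$ and $\widehat{T}$ are manifestly smooth on $\mathbb{R}_+^3$, so the regularity hypotheses are met). Hence it suffices to exhibit a single triple of principal stretches together with a test vector $h\in\mathbb{R}^3$ — equivalently, in the matrix picture, $H=\diag(h_1,h_2,h_3)\in{\rm Sym}(3)$ — for which $\langle {\rm D}\widehat{T}\,.\,h,\,h\rangle<0$.

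The next step is to exploit the explicit form of ${\rm D}\widehat{T}$ computed above. Writing $c:=\frac{M}{2}-\frac{1}{3}$, which is strictly positive precisely because $M>\frac{2}{3}$, and evaluating along the ray $(\lambda_1,\lambda_2,\lambda_3)=(1,1,t)$ with $t>0$, the upper-left $2\times 2$ block of ${\rm D}\widehat{T}$ collapses to the circulant matrix
\[
\begin{pmatrix} 2+c+c\,t^2 & 2c\,t^2 \\ 2c\,t^2 & 2+c+c\,t^2 \end{pmatrix},
\]
whose eigenvalues are $2+c+3c\,t^2$ (eigenvector $(1,1)$) and $2+c-c\,t^2$ (eigenvector $(1,-1)$). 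Testing the full matrix ${\rm D}\widehat{T}(1,1,t)$ against $h=(1,-1,0)$ therefore yields $\langle {\rm D}\widehat{T}(1,1,t)\,.\,h,\,h\rangle = 2\,(2+c-c\,t^2)$, which is strictly negative as soon as $t^2>1+\tfrac{2}{c}=\tfrac{3M+10}{3M-2}$. Thus ${\rm D}\widehat{T}(1,1,t)$ fails to be positive semi-definite for all sufficiently large $t$, for every admissible $M>\frac{2}{3}$.

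It then only remains to run the equivalences in reverse: for such $t$ the symmetric matrix ${\rm D}\widehat{T}(1,1,t)$ is not in ${\rm Sym}^+(3)$, so the vector function $(\lambda_1,\lambda_2,\lambda_3)\mapsto\widehat{T}(\lambda_1,\lambda_2,\lambda_3)$ is not vector-monotone, and hence, by Theorem~\ref{theorem:mainResult}, the Biot stress-stretch relation $U\mapsto T_{\rm Biot}(U)$ is not Hilbert-monotone. This is exactly the assertion of the proposition, and it is consistent with the numerical picture in Figure~\ref{3dm}.

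I do not anticipate a genuine obstacle here; the only real choice is which slice of $\mathbb{R}_+^3$ to test on, and the ray $(1,1,t)$ is convenient because the relevant $2\times 2$ block is circulant there, so its spectrum is available in closed form and one large value of $t$ settles the matter. A slightly less tidy alternative would be to start from Proposition~\ref{notinv}: there ${\rm D}\widehat{T}(\lambda^*\,\id)$ is singular while $\det{\rm D}\widehat{T}(1,1,1)=12\,M>0$, and one might try to argue by continuity that $\det{\rm D}\widehat{T}$ dips below zero nearby; however, this would require an extra transversality check that the determinant genuinely crosses (rather than merely touches) zero, which the direct test-vector computation avoids.
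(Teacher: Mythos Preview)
Your proof is correct. Both you and the paper reduce the question, via Theorem~\ref{theorem:mainResult}, to exhibiting a point at which ${\rm D}\widehat{T}$ fails to be positive semi-definite, but the executions differ. The paper works along the diagonal ray $(\lambda_1,\lambda_1,\lambda_1)$, computes the three leading principal minors of ${\rm D}\widehat{T}(\lambda_1,\lambda_1,\lambda_1)$, and shows that the second one is negative for $\lambda_1>\lambda^*$, where $\lambda^*$ is the root of \eqref{losti} whose existence was already established in Proposition~\ref{notinv}; Sylvester's criterion then finishes the argument. You instead work along the ray $(1,1,t)$, observe that the upper-left $2\times 2$ block is circulant, and test directly against $h=(1,-1,0)$ to obtain the explicit threshold $t^2>(3M+10)/(3M-2)$. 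Your route is more self-contained (it does not rely on Proposition~\ref{notinv}) and yields a closed-form witness; the paper's route has the advantage that the same critical value $\lambda^*$ reappears in the subsequent bifurcation analysis of radial solutions, so its computation does double duty there.
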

\begin{proof}
	
	Note that $
	{\rm D}\widehat{T}(\lambda_1,\lambda_1, \lambda_1)$ is a symmetric $3\times 3$ matrix having the principal minors
	\begin{align}
	m^{\rm Biot}_1(\lambda_1,\lambda_1, \lambda_1)&:=\frac{\frac{1}{6} (3 M-2) \left(\lambda_1^6+1\right)+\lambda_1^2+1}{\lambda_1^2}>0 \quad \text{for}\quad M>\frac{2}{3},\notag\\
	m^{\rm Biot}_2(\lambda_1,\lambda_1, \lambda_1)&:=\frac{\left[3\, (3 M-2) \,\lambda_1^6+6 \lambda_1^2+4+3 M\right] \left[(2-3 M) \,\lambda_1^6+6\, \lambda_1^2+4+3\,M\right]}{36 \,\lambda_1^4},\\
	m^{\rm Biot}_3(\lambda_1,\lambda_1, \lambda_1)&:=\frac{\left[(2-3 M) \lambda_1^6+3 M+6 \lambda_1^2+4\right]^2 \left[5 (3 M-2) \lambda_1^6+3 M+6 \lambda_1^2+4\right]}{216 \lambda_1^6}.\notag
	\end{align}

	The curve of those $(\lambda_1,M)\in \mathbb{R}_+\times (\frac{2}{3},\infty)$ such that \eqref{losti}  is satisfied, divides the plane into two parts, one part at which $(2-3\, M)\, \lambda_1^6+6\, \lambda_1^2+4+3\, M>0$ (above the blue curve) and the part at which $(2-3\, M)\, \lambda_1^6+6\, \lambda_1^2+4+3\, M<0$ (below the blue curve). For each fixed $M>\frac{2}{3}$, see Figure \ref{xm}, for $\lambda_1<\lambda^*$, where $\lambda^*$ corresponds to the intersection point of the red curve with the blue curve,  the pairs $(\lambda_1,M)$ are on the left hand side of the blue, so $(2-3\, M)\, \lambda_1^6+6\, \lambda_1^2+4+3\, M<0$, while for $\lambda_1>\lambda^*$, the pairs $(\lambda_1,M)$ are on the right hand side of the blue, so $(2-3\, M)\, \lambda_1^6+6\, \lambda_1^2+4+3\, M>0$. 
	
	Therefore, we have 
	\begin{align}
	m^{\rm Biot}_1(\lambda_1,\lambda_1, \lambda_1)>0,\qquad 
	m^{\rm Biot}_2(\lambda_1,\lambda_1, \lambda_1)>0,\qquad 
	m^{\rm Biot}_3(\lambda_1,\lambda_1, \lambda_1)>0, \qquad \text{for}\qquad \lambda_1<\lambda^*,\notag\\
	m^{\rm Biot}_1(\lambda_1,\lambda_1, \lambda_1)>0,\qquad 
	m^{\rm Biot}_2(\lambda_1,\lambda_1, \lambda_1)=0,\qquad 
	m^{\rm Biot}_3(\lambda_1,\lambda_1, \lambda_1)=0, \qquad \text{for}\qquad \lambda_1=\lambda^*,\\
	m^{\rm Biot}_1(\lambda_1,\lambda_1, \lambda_1)>0,\qquad 
	m^{\rm Biot}_2(\lambda_1,\lambda_1, \lambda_1)<0,\qquad 
	m^{\rm Biot}_3(\lambda_1,\lambda_1, \lambda_1)>0, \qquad \text{for}\qquad \lambda_1>\lambda^*,\notag
	\end{align}
	and, according to the Sylvester criterion, the proof is complete.
\end{proof}

\section{Existence of the  radial solution for general Neo-Hooke models}\setcounter{equation}{0}

We  have shown that  the map $T_{\rm Biot}:{\rm Sym}^{++}(3)\to{\rm Sym}(3)$ is, in general, not a diffeomorhpism.
However, even if $T_{\rm Biot}$ is not surjective, in the construction of a  homogeneous solution of Rivlin's cube problem, this does not immediately imply that the equation \eqref{Tbiot1} does not have a solution. Moreover, since it is unclear yet whether $T_{\rm Biot}$ is injective, \eqref{Tbiot1}  could have more than one solution.  Equally, after the system 
\begin{align}\label{Tbiot12}
T_{\rm Biot}(U)=\alpha\cdot  \id\qquad \Longleftrightarrow \qquad T_i(\lambda_1,\lambda_2,\lambda_3)=\alpha, \qquad i=1,2,3
\end{align} is solved, one may ask whether $\widehat{T}(\lambda_1,\lambda_2,\lambda_3)$ or $ T_{\rm Biot}$ is locally strongly monotone in the solutions or if the homogeneous solutions are locally unique minimizers or energetically stable. Recall that the stability condition and global monotonicity were defined in Sections \ref{ips2} and \ref{ips3}, while  local strong monotonicity in ${U}\in \Sym^{++}(3)$ means that there exists $c_+>0$ such that for sufficiently small $\varepsilon>0$,
\begin{align}
\langle T_{\rm Biot}(\widetilde{U})- T_{\rm Biot}(U),\widetilde{U}-U\rangle>c_+\,\|\widetilde{U}-U\|^2 \qquad \forall \ \widetilde{U}\in \Sym^{++}(3) \qquad \text{such that} \quad \|\widetilde{U}-U\|<\varepsilon.
\end{align}
We also note that (local) strict monotonicity implies the (local) uniqueness of the solution of \eqref{Tbiot12}, since otherwise, assuming that $U_1$ and $U_2$ are two different solutions,
\begin{align}
\langle T_{\rm Biot}(U_1)- T_{\rm Biot}(U_2),U_1-U_2\rangle=0\,,
\end{align} 
which contradicts  the (local) strict monotonicity.

%

In this section we consider the general models for the classical  Neo-Hooke-type energies, i.e.,
\begin{align}\label{mate}
W_{\rm NH} (F)=\frac{\mu }{2}\langle {C},\id\rangle+ h(\det F)=\frac{\mu }{2}\|F\|^2+ h(\det F)=\frac{\mu }{2}\|U\|^2+ h(\det U).
\end{align}
The entire study is actually equivalent to the study of the one-parameter model described by the energy
\begin{align}
W_{\rm NH}^M (F):=\frac{1}{\mu }W_{\rm NH} (F)=\frac{1}{2}\langle {C},\id\rangle+ \frac{1}{\mu }\,h(\det F)=\frac{1}{2}\|U\|^2+ \frac{1}{\mu }\,h(\det U).
\end{align}
The corresponding first Piola-Kirchhoff stress tensor for this one parameter energy is given by
\begin{align}
S_1^{\rm NH} = F +\frac{1}{\mu } \, h^\prime(\det F)\cdot {\rm Cof} F,
\end{align}
and the Biot stress tensor is
\begin{align}
&T_{\rm Biot}^{\rm NH} (U)={\rm D}_UW_{\rm NH} (U)=R^T S_1=U +\frac{1}{\mu } \, h^\prime(\det U)\cdot \det U\cdot U^{-1}.
\end{align}

In order to have a stress free reference configuration, the function $h$ has to satisfy
$
\frac{3}{2}+\frac{1}{\mu }\,h^\prime(1)=0.
$
Since $\mu >0$, we have $h^\prime(1)<0$.

The first step in the study of the Rivlin cube problem is to check if a radial Biot stress tensor $T_{\rm Biot}^{\rm NH}=\alpha\,\id$ leads to a unique radial solution $U=\beta^+\, \id$ of the equation 
\begin{align}\label{eqNHcl}
T_{\rm Biot}^{\rm NH} (U)=\alpha\, \id.
\end{align}
\begin{proposition}\label{propNH}
For a hypperelastic material of the form \eqref{mate}, if the equation \eqref{eqNHcl} has a  unique radial solution $U=\beta^+\, \id$, $\beta^+>0$ for every $\alpha \in \mathbb{R}$, then the convex function $h$ satisfies
\begin{align}\label{propc1cl}
& \Big(\sqrt[3]{x}+\frac{1}{\mu }\, h^\prime(x)\sqrt[3]{x^2}\Big)^{\prime}\geq 0 \qquad \forall \ x>0,\\
&\lim_{x\rightarrow 0} \Big(\sqrt[3]{x}+\frac{1}{\mu } \, h^\prime(x)\sqrt[3]{x^2}\Big)=-\infty,\qquad
\lim_{x\rightarrow\infty} \Big(\sqrt[3]{x}+\frac{1}{\mu } \, h^\prime(x)\sqrt[3]{x^2}\Big)=\infty.\notag
\end{align}
If the convex function $h$ satisfies
\begin{align}\label{propc1clstrict}
& \Big(\sqrt[3]{x}+\frac{1}{\mu }\, h^\prime(x)\sqrt[3]{x^2}\Big)^{\prime}> 0 \qquad \forall \ x>0,\\
&\lim_{x\rightarrow 0} \Big(\sqrt[3]{x}+\frac{1}{\mu } \, h^\prime(x)\sqrt[3]{x^2}\Big)=-\infty,\qquad
\lim_{x\rightarrow\infty} \Big(\sqrt[3]{x}+\frac{1}{\mu } \, h^\prime(x)\sqrt[3]{x^2}\Big)=\infty,\notag
\end{align}
then  the equation \eqref{eqNHcl} has a  unique radial solution $U=\beta^+\, \id$, $\beta^+>0$ for every $\alpha \in \mathbb{R}$.
\end{proposition}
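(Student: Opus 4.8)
The plan is to restrict equation \eqref{eqNHcl} to radial stretches $U=\beta^+\,\id$ and reduce it to a single scalar equation. For such $U$ one has $\det U=(\beta^+)^3$ and $U^{-1}=\tfrac1{\beta^+}\id$, so
\begin{align*}
T_{\rm Biot}^{\rm NH}(\beta^+\id)=\Big(\beta^++\tfrac1\mu\,h'\big((\beta^+)^3\big)\,(\beta^+)^2\Big)\,\id.
\end{align*}
Substituting $x=(\beta^+)^3$ — a smooth increasing bijection of $(0,\infty)$ onto itself, with inverse $x\mapsto\sqrt[3]{x}$ — turns $T_{\rm Biot}^{\rm NH}(\beta^+\id)=\alpha\,\id$ into $\psi(x)=\alpha$, where
\begin{align*}
\psi\colon(0,\infty)\to\R,\qquad \psi(x)=\sqrt[3]{x}+\tfrac1\mu\,h'(x)\,\sqrt[3]{x^2}.
\end{align*}
Thus ``\eqref{eqNHcl} has a unique radial solution for every $\alpha\in\R$'' is equivalent to ``$\psi$ is a bijection of $(0,\infty)$ onto $\R$'', and the hypotheses \eqref{propc1cl} and \eqref{propc1clstrict} are precisely statements about $\psi$, its limits at the endpoints, and the sign of $\psi'$.

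For the sufficiency part I would assume \eqref{propc1clstrict}: $\psi'>0$ on $(0,\infty)$ together with $\psi(x)\to-\infty$ as $x\to 0^+$ and $\psi(x)\to+\infty$ as $x\to\infty$. Then $\psi$ is continuous and strictly increasing, so by the intermediate value theorem it maps $(0,\infty)$ bijectively onto $\R$; undoing the substitution produces, for each $\alpha$, the unique radial solution $U=\sqrt[3]{x}\,\id$ of \eqref{eqNHcl}.

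For the necessity part I would start from the bijectivity of $\psi\colon(0,\infty)\to\R$. A continuous injection of an interval is strictly monotone, and surjectivity onto $\R$ then forces the one-sided limits of $\psi$ at $0^+$ and at $\infty$ to be $-\infty$ and $+\infty$, in one of the two possible orders. The crux is to exclude the strictly decreasing alternative, which would require $\psi(0^+)=+\infty$. Here convexity of $h$ enters: $h'$ is non-decreasing, hence $h'(x)\le h'(1)<0$ for $x\in(0,1]$ (the stress-free normalization together with $\mu>0$ gives $h'(1)<0$), so with $c:=-\tfrac1\mu h'(1)>0$,
\begin{align*}
\psi(x)\le\sqrt[3]{x}-c\,\sqrt[3]{x^2}=\sqrt[3]{x}\,\big(1-c\,\sqrt[3]{x}\big)\le\tfrac1{4c}\qquad\text{for all }x\in(0,1],
\end{align*}
which is incompatible with $\psi(0^+)=+\infty$. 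Therefore $\psi$ is strictly increasing, whence $\psi(0^+)=-\infty$, $\psi(\infty)=+\infty$, and $\psi'\ge 0$ everywhere (a differentiable strictly increasing function has nonnegative derivative); this is exactly \eqref{propc1cl}.

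The only step that is not mere bookkeeping is the exclusion of the decreasing branch in the necessity direction, for which the bound $h'(x)\le h'(1)$ on $(0,1]$ — a direct consequence of convexity — is the decisive ingredient; I expect this to be the main obstacle, while the remainder is the one-dimensional intermediate value theorem and the fact that a continuous injection of an interval is strictly monotone, transported back and forth through $\beta^+\mapsto(\beta^+)^3$.
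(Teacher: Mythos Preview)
Your proof is correct and follows essentially the same route as the paper's: reduce \eqref{eqNHcl} on radial $U=\beta^+\id$ to the scalar equation $\psi(x)=\alpha$ with $x=(\beta^+)^3$, handle sufficiency via strict monotonicity plus the intermediate value theorem, and for necessity deduce strict monotonicity of $\psi$ and then exclude the decreasing alternative using convexity of $h$ (which forces $h'(x)\le h'(1)<0$ on $(0,1]$ and hence bounds $\psi$ above near $0$). Your exclusion argument is slightly more explicit than the paper's (you give the bound $\psi(x)\le \tfrac{1}{4c}$ on $(0,1]$), but the idea and the structure are the same.
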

\begin{proof}

Equation \eqref{eqNHcl}, after  multiplication with $U$, reads
\begin{align}
U^2 +\frac{1}{\mu } \, h^\prime(\det U)\cdot \det U\cdot \id=\alpha\cdot U.
\end{align}
This system has a radial solution $U=\beta^+\cdot \id$ if $\beta^+$ is a solution to the equation
\begin{align}
\beta^++\frac{1}{\mu } \, h^\prime((\beta^+)^3)(\beta^+)^2=\alpha,
\end{align}
or with the substitution $x=(\beta^+)^3$, if there is a unique positive solution $x$ of  the equation
\begin{align}\label{xalpha}
\sqrt[3]{x}+\frac{1}{\mu }\, h^\prime(x)\sqrt[3]{x^2}=\alpha.
\end{align}

 There  exists at least one solution $x$ of the equation \eqref{xalpha} if and only if for each $\alpha\in \mathbb{R}$ the function $x\mapsto \sqrt[3]{x}+\frac{1}{\mu }\, h^\prime(x)\sqrt[3]{x^2}$ is not bounded on $(0,\infty)$. Otherwise, there exist values of $\alpha$, smaller or larger than the lower bound or upper bound, respectively, for which the function $x\mapsto \sqrt[3]{x}+\frac{1}{\mu }\, h^\prime(x)\sqrt[3]{x^2}$ never reach these values of $\alpha$. On the other hand, if the function is unbounded, then if the function $x\mapsto \sqrt[3]{x}+\frac{1}{\mu }\, h^\prime(x)\sqrt[3]{x^2}$ were not monotone, then the equation \eqref{xalpha} could have more than one solution for some $\alpha\in \mathbb{R}$.
In conclusion,  for a given $\alpha\in\R$, if the equation \eqref{eqNHcl} has a unique solution then the convex function $h$ has one of the following properties:
\begin{align}\label{propc1cl1}
& \Big(\sqrt[3]{x}+\frac{1}{\mu } \, h^\prime(x)\sqrt[3]{x^2}\Big)^{\prime}\geq 0 \qquad \forall x>0,\\
&\lim_{x\rightarrow 0} \Big(\sqrt[3]{x}+\frac{1}{\mu } \, h^\prime(x)\sqrt[3]{x^2}\Big)=-\infty,\quad
\lim_{x\rightarrow\infty} \Big(\sqrt[3]{x}+\frac{1}{\mu } \, h^\prime(x)\sqrt[3]{x^2}\Big)=\infty,\notag
\end{align}
or
\begin{align}\label{propc2cl2}
& \Big(\sqrt[3]{x}+\frac{1}{\mu } \, h^\prime(x)\sqrt[3]{x^2}\Big)^{\prime}\leq 0 \qquad \forall x>0,\\
&\lim_{x\rightarrow 0} \Big(\sqrt[3]{x}+\frac{1}{\mu } \, h^\prime(x)\sqrt[3]{x^2}\Big)=\infty,\quad
\lim_{x\rightarrow\infty} \Big(\sqrt[3]{x}+\frac{1}{\mu } h^\prime(x)\sqrt[3]{x^2}\Big)=-\infty.\notag
\end{align}

Since $h$ is convex, $h^\prime $ is monotone  increasing. Hence,
\begin{align}\label{semnconh}
h^\prime(x)>h^\prime(1) \ \forall\, x>1\qquad \text{and}\qquad h^\prime(x)<h^\prime(1)<0 \ \forall\, x<1.
\end{align}

The second set of conditions \eqref{propc2cl2} is therefore not admissible, since \eqref{propc2cl2}$_2$ implies that  $\lim_{x\rightarrow 0} \Big(\, h^\prime(x)\sqrt[3]{x^2}\Big)=\infty$, which is not possible (since  \eqref{semnconh} yields  $h^\prime(x)\sqrt[3]{x^2}<0$ for all $x<1$). Hence, it remains  that if  the system \eqref{eqNHcl} has  a unique radial solution, then $h$ has to satisfy the conditions \eqref{propc1clstrict}.

Finally, note that, the last part of the conclusions, the uniqueness of $\beta^+$ is implied by the strict monotonicity of  the mapping $x\mapsto \Big(\sqrt[3]{x}+\frac{1}{\mu } \, h^\prime(x)\sqrt[3]{x^2}\Big)$ and by the limit conditions.
\end{proof}

\section{Bifurcation in  Rivlin's cube problem for the compressible Neo-Hooke-Ciarlet-Geymonat model}\setcounter{equation}{0}
Let us  now consider the  Neo-Hooke-Ciarlet-Geymonat model, i.e., the Neo-Hooke model for which $\frac{1}{\mu } h(x)$ is given by  the function  \begin{align}
h_{\rm CG}^M(x)=-\log x+\left(\frac{M}{4\,}-\frac{1}{6}\right)(x^2-2\, \log x-1).
\end{align}

For the Neo-Hooke-Ciarlet-Geymonat model, Rivlin's cube problem amounts to finding the solutions of the nonlinear algebraic system
\begin{align}\label{pbs1}
T_1\equiv \lambda_1-\frac{1}{\lambda_1}+\left(\frac{M}{2}-\frac{1}{3}\right) \left( \lambda_1 \lambda_2^2 \lambda_3^2-\frac{1}{\lambda_1}\right)&=\alpha,\notag\\
T_2\equiv\lambda_2-\frac{1}{\lambda_2}+\left(\frac{M}{2}-\frac{1}{3}\right) \left( \lambda_1^2 \lambda_2 \lambda_3^2-\frac{1}{\lambda_2}\right)&=\alpha,\\
T_3\equiv\lambda_3-\frac{1}{\lambda_3}+\left(\frac{M}{2}-\frac{1}{3}\right) \left( \lambda_1^2 \lambda_2^2 \lambda_3-\frac{1 }{\lambda_3}\right)&=\alpha.\notag
\end{align}

\subsection{Radial solution: three equal stretches}

When we are looking for a radial solution $(\lambda_1,\lambda_2,\lambda_3)=(\beta^+,\beta^+,\beta^+)$ of the system \eqref{pbs1}, we are looking for a solution of the equation
\begin{align}
T_1(\beta^+,\beta^+,\beta^+)\equiv\frac{(3 M-2) \,(\beta^+)^6+6 \,(\beta^+)^2-4-3 \,M}{6 \,\beta^+}=\alpha.
\end{align}

As shown, such a solution exists, and its uniqueness is equivalent to the conditions on $h_{\rm CG}$ from Proposition~\ref{propNH}. For the Neo-Hooke-Ciarlet-Geymonat model, condition \eqref{propc1cl} is
\begin{align}\label{proplogneo2}
 \frac{(3 M-2) \left(5 x^2+7\right)}{216 \,x\,\sqrt[3]{x}}+\frac{1}{3\, \sqrt[3]{x^2}}>0 \qquad \forall\, x>0,
\end{align}
which is clearly satisfied for $M=\frac{\lambda+\frac{2\,\mu}{3}}{\mu }>\frac{2}{3}$. The other two conditions \eqref{propc1cl}$_{2,3}$ are also satisfied, since
\begin{align}
\lim_{x\rightarrow 0}\left(\frac{(3 M-2) \sqrt[3]{x^2} \left(x^2-7\right)}{72 \,x}+\sqrt[3]{x}\right)=-\infty \qquad \text{and}\qquad \lim_{x\rightarrow \infty}\left(\frac{(3 M-2) \sqrt[3]{x^2} \left(x^2-7\right)}{72\, x}+\sqrt[3]{x}\right)=\infty.
\end{align}
 Therefore, the corresponding radial solutions are unique.

\restylefloat*{figure}

\begin{figure}[!h]
	\centering
	\begin{minipage}{.5\textwidth}
		\centering
		\includegraphics[width=0.8\linewidth]{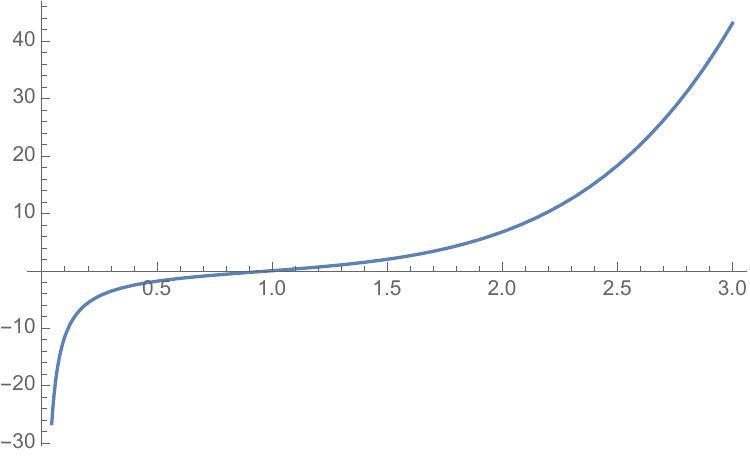}
		\caption{The plot of $\beta^+\mapsto f_{\rm Biot}(\beta^+)=T_1(\beta^+,\beta^+,\beta^+)$}
		\label{f-biot}
	\end{minipage}
	\begin{minipage}{0.5\textwidth}
		\centering
		\vspace*{1.6cm}
		\includegraphics[width=1\linewidth]{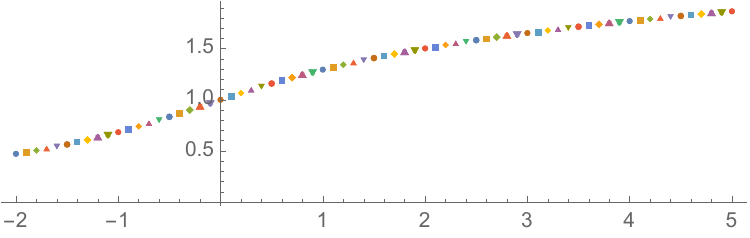}
		\captionsetup{labelsep=space,justification=justified,singlelinecheck=off}
		\caption{The solution $\beta^+$  of $T_1(\beta^+,\beta^+,\beta^+)=\alpha$ depends strictly monotone on $\alpha$.}
		\label{diag}
	\end{minipage}
\end{figure}

\begin{proposition}
	For the compressible Neo-Hooke-Ciarlet-Geymonat material, for all $\alpha\in \mathbb{R}$,  the constitutive equation  $
	T_{\rm Biot} (U)=\alpha\, \id
	$
	has a unique radial solution $U=\beta^+\cdot\id_3$, $\beta^+>0$.
	Moreover, since the mapping \begin{align}f_{\rm Biot}:(0,\infty)\to \mathbb{R},\qquad f_{\rm Biot}(\beta^+):=T_1(\beta^+,\beta^+,\beta^+)=\frac{(3 M-2) \,(\beta^+)^6+6 \,(\beta^+)^2-4-3 \,M}{6 \,\beta^+}\end{align} is strictly monotone  increasing, continuous and surjective (see Figure \ref{f-biot}),  the solution $\beta^+=\beta^+(\alpha)$ is a monotone  increasing function (see Figure \ref{diag}).
\end{proposition}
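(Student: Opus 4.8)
The plan is to reduce the tensorial equation $T_{\rm Biot}(U)=\alpha\cdot\id$, restricted to radial stretches, to a single scalar equation, and then to show that the resulting scalar function $f_{\rm Biot}$ is a continuous, strictly increasing bijection of $(0,\infty)$ onto $\mathbb{R}$.

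First I would insert the ansatz $(\lambda_1,\lambda_2,\lambda_3)=(\beta^+,\beta^+,\beta^+)$ into the system \eqref{pbs1}. By the permutation symmetry of the principal Biot stresses, the three equations $T_i=\alpha$ collapse to the single scalar equation $T_1(\beta^+,\beta^+,\beta^+)=\alpha$, and a direct computation using $\tfrac M2-\tfrac13=\tfrac{3M-2}{6}$ gives
\begin{align}
T_1(\beta^+,\beta^+,\beta^+)=f_{\rm Biot}(\beta^+)=\frac{(3M-2)\,(\beta^+)^6+6\,(\beta^+)^2-4-3\,M}{6\,\beta^+}.
\end{align}
Up to the substitution $x=(\beta^+)^3$ this is precisely equation \eqref{xalpha} of Proposition~\ref{propNH} specialised to $h=h_{\rm CG}^M$, so the statement is in fact a corollary of Proposition~\ref{propNH} combined with the verification of its hypotheses \eqref{propc1clstrict} already performed in \eqref{proplogneo2} and the two limit computations preceding this proposition; I would nevertheless give a direct argument for clarity.

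For the direct argument I would write $f_{\rm Biot}(\beta)=\tfrac{3M-2}{6}\,\beta^5+\beta-\tfrac{4+3M}{6}\,\beta^{-1}$ and differentiate, obtaining
\begin{align}
f_{\rm Biot}'(\beta)=\frac{5(3M-2)}{6}\,\beta^4+1+\frac{4+3M}{6}\,\beta^{-2}>0\qquad\text{for all }\ \beta>0,\ M>\tfrac23,
\end{align}
since each of the three summands is strictly positive whenever $M>\tfrac23$; hence $f_{\rm Biot}$ is $C^\infty$ and strictly increasing on $(0,\infty)$. From the same decomposition, the term $-\tfrac{4+3M}{6}\,\beta^{-1}$ dominates as $\beta\downarrow 0$ and the term $\tfrac{3M-2}{6}\,\beta^5$ dominates as $\beta\to\infty$, so $f_{\rm Biot}(\beta)\to-\infty$ and $f_{\rm Biot}(\beta)\to+\infty$ respectively. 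Continuity together with the intermediate value theorem then shows that $f_{\rm Biot}\col(0,\infty)\to\mathbb{R}$ is a bijection.

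It follows that for every $\alpha\in\mathbb{R}$ there is exactly one $\beta^+=\beta^+(\alpha):=f_{\rm Biot}^{-1}(\alpha)\in(0,\infty)$; by the scalar reduction this $\beta^+$ yields the unique radial solution $U=\beta^+\cdot\id_3$ of $T_{\rm Biot}(U)=\alpha\cdot\id$, uniqueness among radial solutions being exactly the injectivity of $f_{\rm Biot}$. Finally, $\alpha\mapsto\beta^+(\alpha)$, as the inverse of a continuous strictly increasing function, is itself continuous and strictly increasing (cf.\ Figures \ref{f-biot} and \ref{diag}). There is no real obstacle in this argument; the only point needing attention is to confirm that the positivity of the three terms in $f_{\rm Biot}'$ and the identification of the dominant terms in the two limits hold uniformly for every admissible $M>\tfrac23$, which is immediate from the explicit signs.
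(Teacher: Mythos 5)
Your proposal is correct and proceeds essentially the same way as the paper: insert the radial ansatz, reduce to the scalar equation $f_{\rm Biot}(\beta^+)=\alpha$, and conclude from strict monotonicity and the limits at $0$ and $\infty$ — which you correctly identify as exactly the specialisation of the hypotheses of Proposition~\ref{propNH} to $h=h_{\rm CG}^M$ already verified in \eqref{proplogneo2}. Your additional direct verification, differentiating $f_{\rm Biot}(\beta)=\tfrac{3M-2}{6}\beta^5+\beta-\tfrac{4+3M}{6}\beta^{-1}$ in the $\beta$-variable rather than passing to $x=\beta^3$ as the paper does, is equivalent (the two derivatives differ by the positive factor $\tfrac13 x^{-2/3}$) and arguably slightly cleaner, but it does not constitute a genuinely different route.
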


However, when the bifurcation problem is studied in the Rivlin's cube problem, we are interested to study if for all $\alpha>0$ all  radial solutions $U=\beta^+\cdot \id$ of the equation 
\begin{align}\label{eqNHcl1}
T_{\rm Biot}^{\rm NH} (U)=\alpha\, \id
\end{align}
are locally unique in the general classes of all possible solutions $U\in {\rm Sym}^{++}(3)$ (possibly non-radial), see Table \ref{tablec} for a summary of the constitutive conditions used in this paper. 

Note that we are not interested  in having a unique solution of \eqref{eqNHcl1}, but a locally unique solution. This is  because we have to study whether the solution may continuously (in the sense of the continuity of the map $\alpha\mapsto U(\alpha)$) depart from a radial one to a non-radial one and vice-versa. This is only possible in those points in which the mapping $U\mapsto T_{\rm Biot}(U)$ is not invertible, i.e., using Theorem \ref{theorem:invertibility}, in those points $(\lambda_1^*,\lambda_2^*, \lambda_3^*)$ where 
\begin{align}
\det {\rm D}\widehat{T}(\lambda_1^*,\lambda_2^*, \lambda_3^*)=0.
\end{align}

Specifically, we are thus interested in the existence of a radial $U=\lambda_1\, \id$, such that
\begin{align}
\det {\rm D}\widehat{T}(\lambda_1,\lambda_1, \lambda_1)=0,
\end{align}
i.e., whether the map $U\mapsto T_{\rm Biot}(U)$ loses local differentiable invertibility in a radial $U$.

Indeed, for each material given by $M>\frac{2}{3}$ we have that 
\begin{align}
\det {\rm D}\widehat{T}(\lambda_1,\lambda_1, \lambda_1)=\frac{\left[(2-3 M)\, \lambda_1^6+6 \lambda_1^2+4+3 M\right]^2 \left[5\, (3 M-2)\, \lambda_1^6+6\, \lambda_1^2+4+3 M\right]}{216 \,\lambda_1^6},
\end{align}
and therefore, $U\mapsto T_{\rm Biot}(U)$ loses the local invertibility in $U=\lambda^*\, \id$, where $\lambda^*$ is the unique solution (see the proof of Proposition \ref{notinv}) of the equation
\begin{align}\label{losti1}
(2-3\, M) \,\lambda_1^6+6 \, \lambda_1^2+4+3 M=0. 
\end{align}
Since for $M>\frac{2}{3}$ the above equation has a unique positive solution, see the proof of Proposition \ref{notinv} and Figure \ref{xm} (for fixed $M$, the unique  solution is the intersection of the red line with the blue curve), we  argue that the bifurcation occurs for all admissible constitutive parameters in only one radial solution. 

We recall that, from the proof of Proposition \ref{thmon}, we have 
	\begin{align}
	m^{\rm Biot}_1(\lambda_1,\lambda_1, \lambda_1)>0,\qquad 
	m^{\rm Biot}_2(\lambda_1,\lambda_1, \lambda_1)>0,\qquad 
	m^{\rm Biot}_3(\lambda_1,\lambda_1, \lambda_1)>0, \qquad \text{for}\qquad \lambda_1<\lambda^*,\notag\\
	m^{\rm Biot}_1(\lambda_1,\lambda_1, \lambda_1)>0,\qquad 
	m^{\rm Biot}_2(\lambda_1,\lambda_1, \lambda_1)=0,\qquad 
	m^{\rm Biot}_3(\lambda_1,\lambda_1, \lambda_1)=0, \qquad \text{for}\qquad \lambda_1=\lambda^*,\\
	m^{\rm Biot}_1(\lambda_1,\lambda_1, \lambda_1)>0,\qquad 
	m^{\rm Biot}_2(\lambda_1,\lambda_1, \lambda_1)<0,\qquad 
	m^{\rm Biot}_3(\lambda_1,\lambda_1, \lambda_1)>0, \qquad \text{for}\qquad \lambda_1>\lambda^*,\notag
	\end{align}
where $	m^{\rm Biot}_1(\lambda_1,\lambda_1, \lambda_1), 	m^{\rm Biot}_2(\lambda_1,\lambda_1, \lambda_1)$ and $	m^{\rm Biot}_1(\lambda_1,\lambda_1, \lambda_1)$	are the principal minors of ${\rm D}\widehat{T}(\lambda_1,\lambda_1, \lambda_1)$. 

\begin{table}[h!]
	\centering
	\resizebox{17cm}{!}{	\begin{tabular}{|l|l|l|l|l|}
			\hline
			{\bf In terms of }	&{\bf invertibility of $T_{\rm Biot}$}& {\bf \textcolor{magenta}{Hilbert-monotonicity of $T_{\rm Biot}$}}&  {\bf \textcolor{blue}{strong-monotonicity of $\widehat{T}$}}&  {\bf \textcolor{orange}{energetic stability}}\\
			\hline
			\begin{minipage}{4cm}the principal Biot stresses $\widehat{T}=(T_1,T_2,T_3)^T$\end{minipage}&	\begin{minipage}{4.9cm}\smallskip
				${\rm D}\widehat{T}\,(\lambda_1,\lambda_2,\lambda_3)$  is invertible\\ for any $(\lambda_1,\lambda_2,\lambda_3)\in \mathbb{R}^3_+$;
				\\
				and\\
				$\|\widehat{T}(\lambda_1,\lambda_2,\lambda_3)\|_{\mathbb{R}^3}\to \infty$ as \\ $\|(\log\lambda_1,\log\lambda_2,\log\lambda_3)\|_{\mathbb{R}^3}\to \infty$,
				\smallskip
			\end{minipage}&\begin{minipage}{4.3cm}
				\textcolor{magenta}{${\rm D}\widehat{T}\,(\lambda_1,\lambda_2,\lambda_3)\in{\rm Sym}^{+}(3)$}
			\end{minipage}&\begin{minipage}{4.3cm}
			\textcolor{blue}{${\rm D}\widehat{T}\,(\lambda_1,\lambda_2,\lambda_3)\in{\rm Sym}^{++}(3)$}
		\end{minipage}& \begin{minipage}{4.1cm}
				\textcolor{orange}{	$\frac{T_i-\epsilon_{ij} \,T_j}{\lambda_i-\epsilon_{ij} \,\lambda_j}\geq 0  \ i\neq j\  \text{no sum}
					$}\\
				\textcolor{orange}{and}  \\
				\textcolor{orange}{${\rm D} \widehat{T}(\lambda_1,\lambda_2,\lambda_3)\in{\rm Sym}^{+}(3)$.}\end{minipage}\\
			\hline
			\begin{minipage}{3.5cm}the energy expressed in the principal stretches\\
				$W(F)=g(\lambda_1,\lambda_2,\lambda_3)$\end{minipage}& \begin{minipage}{4.9 cm}\smallskip
				${\rm D}^2g\,(\lambda_1,\lambda_2,\lambda_3)$  is invertible\\ for any $(\lambda_1,\lambda_2,\lambda_3)\in \mathbb{R}^3_+$;\\
				and\\
				$\|{\rm D}g(\lambda_1,\lambda_2,\lambda_3)\|_{\mathbb{R}^3}\to \infty$ as \\
				$\|(\log\lambda_1,\log\lambda_2,\log\lambda_3)\|_{\mathbb{R}^3}\to \infty$,\smallskip
			\end{minipage}&\begin{minipage}{4.4cm}
				\textcolor{magenta}{	${\rm D}^2g\,(\lambda_1,\lambda_2,\lambda_3)\in{\rm Sym}^{+}(3)$}
			\end{minipage}&\begin{minipage}{4.4cm}
			\textcolor{blue}{	${\rm D}^2g\,(\lambda_1,\lambda_2,\lambda_3)\in{\rm Sym}^{++}(3)$}
		\end{minipage}& \begin{minipage}{4.6cm}
				\textcolor{orange}{$\frac{\frac{\partial g}{\partial \lambda_i}-\epsilon_{ij} \,\frac{\partial g}{\partial \lambda_j}}{\lambda_i-\epsilon_{ij} \,\lambda_j}\geq 0, \ i\neq j \  \text{no sum}
					$}\\
				\textcolor{orange}{and} \\
				\textcolor{orange}{${\rm D}^2 g(\lambda_1,\lambda_2,\lambda_3)\in{\rm Sym}^{+}(3)$.}\end{minipage}\\
			\hline
	\end{tabular}}	\caption{A summary of the constitutive conditions used in this paper. Here, $\epsilon_{ij}
		=1$ if $\{i,j\}=\{1,2\}$  or  $\{2,3\}$ or $\{3,1\}$, $\epsilon_{ij}=-1$ otherwise.}\label{tablec}
\end{table}

Hence, even if the map $\alpha\mapsto \beta(\alpha)$ giving the solution of $T_{\rm Biot}(\beta\,\id )=\alpha\,\id $ is strictly monotone  increasing, the relation $T_{\rm Biot}=T_{\rm Biot}(U)$ could be locally strictly monotone  only at those radial $U=\lambda_1\, \id$  for which $\lambda_1<\lambda^*$, and it loses its strict monotonicity on those radial $U=\lambda_1\, \id$ for which $\lambda_1>\lambda^*$, see Figures \ref{mr1} and \ref{mr2}. Moreover, $\widehat{T}$ is strongly monotone only for $\lambda_1<\lambda^*$. This is unphysical, since for purely radial deformations, the Biot stress should clearly increase with the strain; therefore, for $U=\lambda_1^*\, \id$, $\lambda_1\geq\lambda^*$, the radial solution should not be considered physically admissible anymore. In other words,  the cube cannot remain a cube by increasing its length above $\lambda^*$ and at the same time keeping the strict monotonicity of the $T_{\rm Biot}=T_{\rm Biot}(U)$ relation.

\begin{figure}[h!]
	\begin{minipage}{.3\textwidth}
		\centering
		\includegraphics[width=1\linewidth]{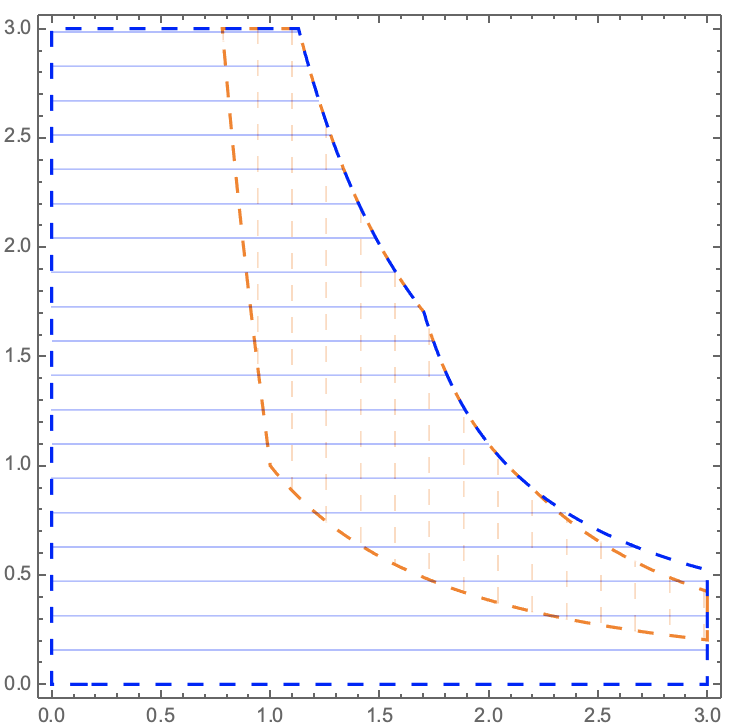}
		\caption{For $M=1$, the strong monotonicity region in the points $(\lambda_1,\lambda_1,\lambda_2)$ (the blue region) versus the energetic stability region in the points $(\lambda_1,\lambda_1,\lambda_2)$ (the orange region).}\label{mr1}
	\end{minipage}\quad 
	\begin{minipage}{.3\textwidth}
		\centering
		\vspace*{-0.7cm}
		\includegraphics[width=1\linewidth]{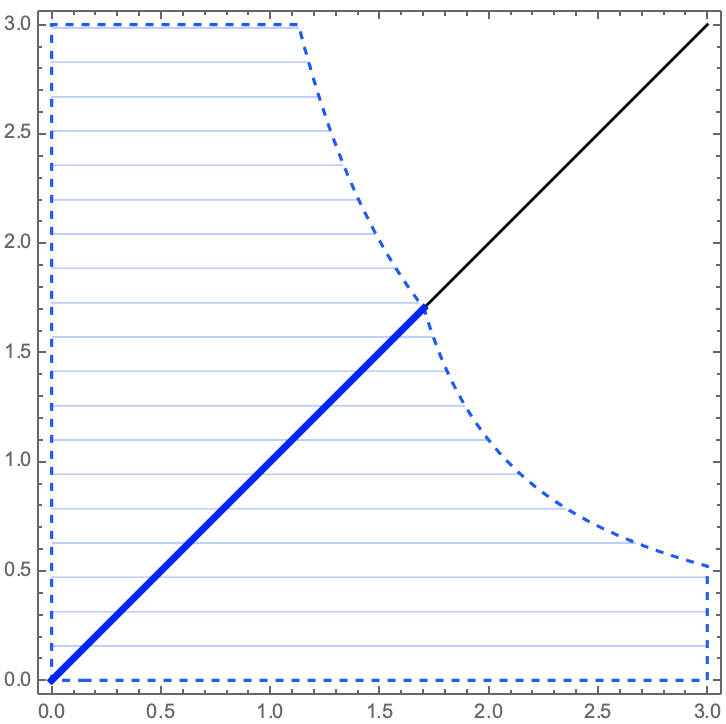}
		\caption{ The strong monotonicity is satisfied on the radial solutions until the bifurcation point  (on the blue curve), i.e., $0<\lambda_1\leq\lambda^*$. }\label{mr2}
	\end{minipage}\quad 
\begin{minipage}{.3\textwidth}
	\centering
	\includegraphics[width=1\linewidth]{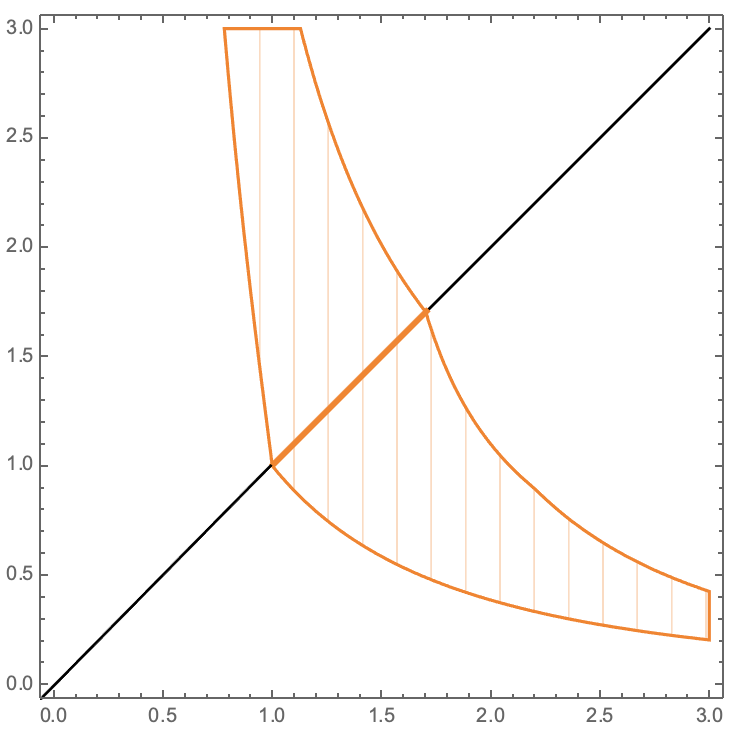}
	\caption{ Contrary to strong  monotonicity,  the energetic stability  is satisfied on the radial solutions only starting with $1$ and until the bifurcation point  (on the blue curve), i.e., $1\leq\lambda_1\leq \lambda^*$. }\label{mr3}
\end{minipage}
\end{figure}
Regarding the energetic stability of the radial solution, we note that the stability conditions \eqref{staine1} against arbitrary perturbations   for radial solutions, by letting $\lambda_i\to\lambda$, read as
\begin{align}
&\left(\frac{\partial^2 g}{\partial \lambda_1^2}-\frac{\partial^2 g}{\partial \lambda_1\partial \lambda_2}\right)\Big|_{\lambda_i=\lambda}\geq 0, \qquad \left(\frac{\frac{\partial g}{\partial \lambda_2}+\frac{\partial g}{\partial \lambda_1}}{2\lambda_1}\right)\Big|_{\lambda_i=\lambda}\geq 0.
\end{align}
In addition to these conditions, energetic stability requires to check the positive semi-definiteness of the Hessian matrix evaluated in the radial solution ${\rm D}^2 g=\left(\frac{\partial^2 g}{\partial \lambda_i\partial \lambda_j}\right)\Big|_{\lambda_i=\lambda}$, too.

The first  inequality is equivalent to 
\begin{align}
 \frac{(2-3 M) \lambda_1^6+6\, \lambda_1^2+{3 M+4}}{6\,\lambda_1^2}\geq0,
\end{align}
while the second one is equivalent to
\begin{align}
\frac{(3 M-2)  \lambda_1^6+6  \lambda_1^2-4-3 M}{3\,  \lambda_1^2}\geq0.
\end{align} 
Note that the Hessian matrix ${\rm D}^2 g$ is actually $
{\rm D}\widehat{T}(\lambda_1,\lambda_1, \lambda_1)$, and therefore the last condition for stability, i.e. the positive semi-definiteness of  ${\rm D}^2 g$, is implied by the strict monotonicity of $\widehat{T}$. Moreover, the first  inequality required by the stability criterion is redundant, {and so} it follows from the local positive definiteness of $
{\rm D}\widehat{T}(\lambda_1,\lambda_1, \lambda_1)$, see the expression of $m_1^{\rm Biot}(\lambda_1,\lambda_1, \lambda_1)$. In conclusion, the stability of the radial solutions is implied by  the strong monotonicity of $\widehat{T}$ in the radial solutions, which holds true only until the radial solution reaches the bifurcation point, i.e. for $\lambda_i\leq \lambda^*$, $i=1,2,3$, together {with} 
\begin{align}\label{bneg}
(3 M-2)  \lambda_1^6+6  \lambda_1^2-4-3 M\geq0.
\end{align}

Since, $T_{1}(\lambda_1,\lambda_1,\lambda_1)=\frac{(3 M-2) \lambda_1^6+6 \lambda_1^2-4-3 M}{6 \lambda_1}$, \eqref{bneg} is possible only when $T_{\rm Biot}(\diag(\lambda_1,\lambda_1,\lambda_1))=\alpha\id$, with $\alpha\geq 0$. Note again that $\lambda_1\mapsto T_{1}(\diag(\lambda_1,\lambda_1,\lambda_1))$, $\lambda_1>0$ is strictly monotone  increasing, continuous and surjective, $\lim_{\lambda_1\to \infty}  T_{1}(\lambda_1,\lambda_1,\lambda_1)=\infty$, $\lim_{\lambda_1\to -\infty}  T_{1}(\lambda_1,\lambda_1,\lambda_1)=-\infty$ and $  T_{1}(1,1,1)=0$. Hence, inequality \eqref{bneg} holds true only for $\lambda_1\geq1$, i.e., when the radial solution is a uniform extension.

Hence, the radial solution is stable, see Figure \ref{mr3}, only for
\begin{align}
1\leq\lambda_1\leq\lambda^*,
\end{align}
which lets us conclude that the stability criteria for the radial solutions are more restrictive than the monotonicity criteria.

\subsection{Two equal principal stretches $(\lambda_1= \lambda_2\neq\lambda_3)$}

In this subsection we find the solutions of the form $U=\begin{pmatrix}
\lambda_1&0&0\\
0&\lambda_1&0\\
0&0&\lambda_2
\end{pmatrix}$, $\lambda_1\neq \lambda_2$, of the equation $T_{\rm Biot}(U)=\alpha\, \id$. We show that in the neighbourhood of the unique radial   solution $U=\lambda_1^*\, \id$ for which $\det {\rm D}\widehat{T}(\lambda_1^*,\lambda_1^*,\lambda_1^*)$ is not invertible, i.e., in a neighbourhood of   $\alpha^*=T_1(\lambda_1^*,\lambda_1^*,\lambda_1^*)$, the equation $T_{\rm Biot}(U)=\alpha_\epsilon\, \id$ admits a solution $U_\epsilon=\begin{pmatrix}
\lambda_1^\varepsilon&0&0\\
0&\lambda_1^\varepsilon&0\\
0&0&\lambda_2^\varepsilon
\end{pmatrix}$, $\lambda_1^\varepsilon\neq \lambda_2^\varepsilon$ which tends to the radial solution $U^*=\begin{pmatrix}
\lambda_1^*&0&0\\
0&\lambda_1^*&0\\
0&0&\lambda_1^*
\end{pmatrix}$ when $\alpha_\varepsilon$ goes to $\alpha^*$.

\smallskip\noindent
Hence, we have to solve the system
\begin{align}
\frac{\frac{1}{6} (3 M-2) \left(\lambda_1^4 \lambda_2^2-1\right)+\lambda_1^2-1}{\lambda_1}&=\alpha,\\
\frac{1}{6} (3 M-2) \lambda_1^4 \lambda_2-\frac{3 M+4}{6 \lambda_2}+\lambda_2&=\alpha,\notag
\end{align}
since $T_1(\beta, \beta, \gamma)=T_2(\beta, \beta, \gamma)=\frac{\frac{1}{6} (3 M-2) \left(\beta^4 \gamma^2-1\right)+\beta^2-1}{\beta}$ and $T_3(\beta, \beta, \gamma)=\frac{1}{6} (3 M-2) \beta^4 \gamma-\frac{3 M+4}{6 \gamma}+\gamma$, or equivalently to solve the system
\begin{align}\label{de}
T_1(\lambda_1, \lambda_1, \lambda_2)-T_3(\lambda_1, \lambda_1, \lambda_2)
\equiv-\frac{(\lambda_1-\lambda_2) \left((3 M-2) \lambda_1^4 \lambda_2^2-3 M-6 \lambda_1 \lambda_2-4\right)}{6 \lambda_1 \lambda_2}&=0,\\
\frac{\frac{1}{6} (3 M-2) \left(\lambda_1^4 \lambda_2^2-1\right)+\lambda_1^2-1}{\lambda_1}&=\alpha.\notag
\end{align}
Hence, $\lambda_1=\lambda_2$, i.e., {yielding} the radial solution obtained in the previous section, or 
\begin{align}
\lambda_2=\frac{\sqrt{(9 M^2 +6 M -8) \lambda_1^4+9 \lambda_1^2}+3 \lambda_1}{(3 M -2) \lambda_1^4}, 
\end{align}
with $\lambda_1$ being a solution of 
\begin{align}\label{de2}
\frac{\sqrt{\left(9 M^2+6 M-8\right) \lambda_1^4+9 \lambda_1^2}+(3 M-2) \lambda_1^5+3 \lambda_1}{(3 M-2) \lambda_1^4}=\alpha.
\end{align}
For any $M>\frac{2}{3}$, the function 
\begin{align}
\ell:(0,\infty)\to (0,\infty), \quad \ell(\lambda_1)=\frac{\sqrt{\left(9 M^2+6 M-8\right) \lambda_1^4+9 \lambda_1^2}+(3 M-2) \lambda_1^5+3 \lambda_1}{(3 M-2) \lambda_1^4}
\end{align}
is convex.
Moreover, we have 
\begin{align}
\lim_{\lambda_1\to 0}\ell(\lambda_1)=\infty=\lim_{\lambda_1\to \infty}\ell(\lambda_1).
\end{align}
Thus,
\begin{itemize}
	\item for $\alpha<\min_{\lambda_1>0}\ell(\lambda_1)$ the equation \label{de2} has no solutions. Therefore, only the radial solution which always exists is a solution of the equation $T_{\rm Biot}(U)=\alpha\, \id$. 
	\item for $\alpha=\min_{\lambda_1>0}\ell(\lambda_1)$ the equation \label{de2} has one solution. Hence, the equation $T_{\rm Biot}(U)=\alpha\, \id$ has two solutions: one radial and another with two equal eigenvalues.
	\item  for all $\alpha>\min_{\lambda_1>0}\ell(\lambda_1)$ the equation \label{de2} has two different admissible solutions, which lead to two non-radial admissible solutions of the equation $T_{\rm Biot}(U)=\alpha\, \id$.
Besides these two solutions we have the already found radial solution, too.
\end{itemize}

\begin{figure}[h!]
	\centering
	\includegraphics[scale=0.75]{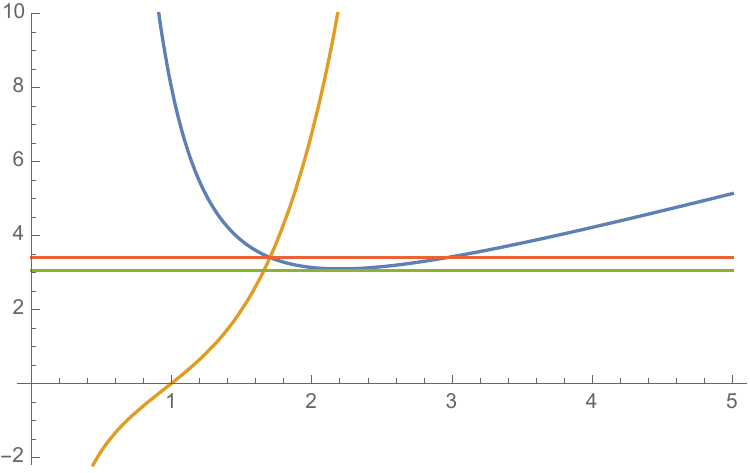}
	\caption{\footnotesize{The plot of $\beta\mapsto \ell(\beta)$ (blue curve) and plot of $\beta\mapsto f_{\rm Biot}(\beta)$ (orange curve) for $M=1$.}}
	\label{2e}
\end{figure}%

More about the behaviour of these solutions may be observed from Figure \ref{2e}, using $M=1$. For values of $\alpha$ below the green line, there exists only the radial solution. Between the green and the red lines there exist the radial solution and  two other non-radial solutions with two equal eigenvalues. By approaching the red line, one non-radial solutions goes to the radial solution situated at the intersection point of the orange curve with the blue curve, while the other non-radial solution tends into the other direction of the blue curve and will never be  in the neighbourhood of a radial solution. For values of $\alpha$ above the red line, there are again three different solutions. 

It is easy to find that no bifurcation occurs in compression, since for $\alpha<0$ there exists only the radial solution. Thus, at the radial solution given by the intersection point of the blue and orange curve,  the relation $T_{\rm Biot}=T_{\rm Biot}(U)$ is not locally invertible, since the radial solution is not locally unique. 

\begin{figure}[h!]
	\centering
	\includegraphics[scale=0.75]{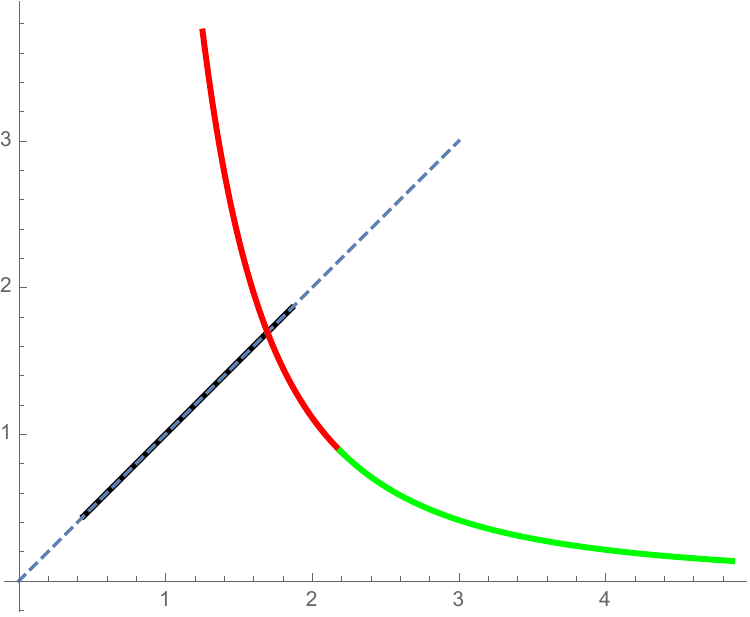}
	\put(-120,180){\footnotesize radial solution, always exists}
	\put(-130,80){\footnotesize }
	\put(-253,142){\footnotesize first non-radial}
		\put(-230,132){\footnotesize  solution}
	\put(-164,55){\vector(2,-1){30}}
	\put(-164,55){\vector(-1,3){40}}
		\put(-175,20){\footnotesize second non-radial solution}
		\put(0,0){\footnotesize $\lambda_1$}
		\put(-280,220){\footnotesize $\lambda_2$}
	\caption{\footnotesize{ For $M=1$, radial and non-radial solutions, bifurcation.}}\label{bif1}
	
\end{figure}%

In conclusion, a bifurcation point  $\lambda^*$ is a solution of \eqref{losti1}. In Figure \ref{bif1} we plot the path of the point $(\lambda_1,\lambda_2)$ as a function of $\alpha\in [0,5]$ with a step size of $0.1$, by solving numerically the equation \eqref{de2} for $M=1$, but the analysis is completely the same for any other value of $M$.

\begin{figure}[h!]
	\centering
	\begin{minipage}{.3\textwidth}
		\centering
		\includegraphics[width=1\linewidth]{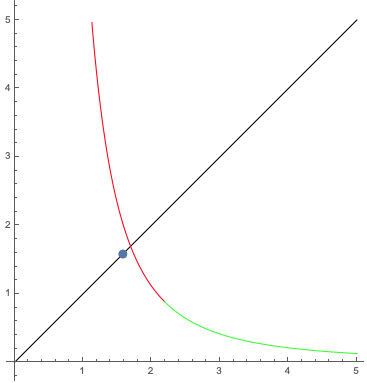}
		\put(-75,135){$\alpha=2.5$}
			\put(0,0){\footnotesize $\lambda_1$}
		\put(-153,150){\footnotesize $\lambda_2$}
	\end{minipage}\qquad 
	\begin{minipage}{.3\textwidth}
	\centering
		\includegraphics[width=1\linewidth]{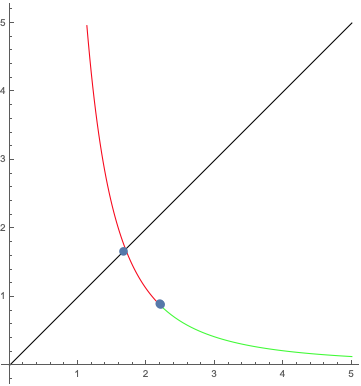}
		\put(-75,135){$\alpha=2.8$}
			\put(-90,50){$\lambda_1=1.62561$}
				\put(0,0){\footnotesize $\lambda_1$}
			\put(-153,150){\footnotesize $\lambda_2$}
		\captionsetup{labelsep=space,justification=justified,singlelinecheck=off}
	\end{minipage}\qquad 
	\begin{minipage}{.3\textwidth}
		\centering
	\includegraphics[width=1\linewidth]{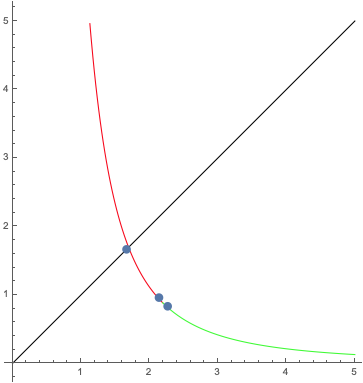}
	\put(-75,135){$\alpha=3.09675$}
		\put(0,0){\footnotesize $\lambda_1$}
	\put(-153,150){\footnotesize $\lambda_2$}
	\captionsetup{labelsep=space,justification=justified,singlelinecheck=off}
\end{minipage}\vspace{5mm}\\
	\centering
\begin{minipage}{.3\textwidth}
	\centering
	\includegraphics[width=1\linewidth]{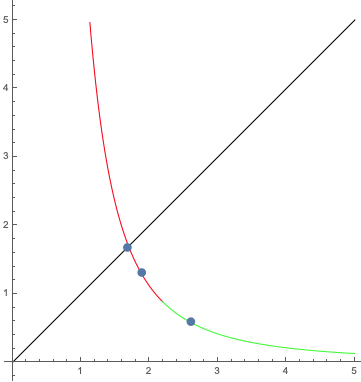}
	\put(-75,135){$\alpha=3.1$}
		\put(0,0){\footnotesize $\lambda_1$}
	\put(-153,150){\footnotesize $\lambda_2$}
\end{minipage}\qquad 
\begin{minipage}{.3\textwidth}
\centering
	\includegraphics[width=1\linewidth]{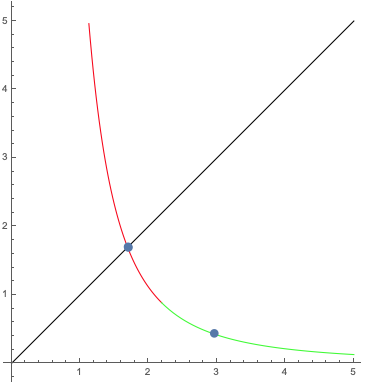}
	\put(-75,135){$\alpha=3.4$}
		\put(-90,50){$\lambda_1=1.70237$}
			\put(0,0){\footnotesize $\lambda_1$}
		\put(-153,150){\footnotesize $\lambda_2$}
	\captionsetup{labelsep=space,justification=justified,singlelinecheck=off}
\end{minipage}\qquad 
\begin{minipage}{.3\textwidth}
	\centering
	\includegraphics[width=1\linewidth]{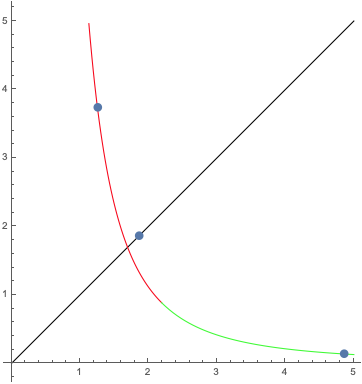}
	\put(-75,135){$\alpha=5$}
		\put(0,0){\footnotesize $\lambda_1$}
	\put(-153,150){\footnotesize $\lambda_2$}
\end{minipage}
\caption{For $M=1$, the solutions of the equation $T_i(\lambda_1,\lambda_1,\lambda_2)=\alpha, i=1,2,3$ for a sequence of increasing values of $\alpha$.}\label{film}
\end{figure}

\begin{figure}[h!]
\hspace*{2.5cm}
	\includegraphics[scale=0.5]{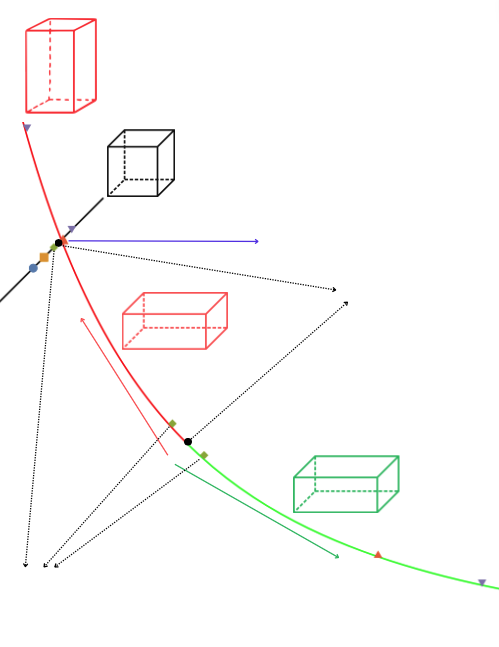}
	\put(-155,240){\footnotesize radial solution}
	\put(-130,80){\footnotesize }
	\put(-180,140){\footnotesize first non-radial}
		\put(-240,320){\footnotesize first non-radial}
	\put(-105,100){\footnotesize second non-radial solution}
		\put(-270,30){\footnotesize three solutions: one radial solution }
			\put(-207,20){\footnotesize and two non-radial solutions}
			\put(-270,10){\footnotesize - one non-radial  solution goes up on the red curve and it reaches the bifurcation point}
			\put(-270,0){\footnotesize - another radial solution goes down on the green curve and it never reaches the bifurcation point}
				\put(-70,177){\footnotesize two solutions: one radial solution  }
					\put(-70,167){\footnotesize  and one non-radial solution.}
						\put(-70,157){\footnotesize  At this magnitude $\alpha$ of the load }
							\put(-70,147){\footnotesize   ({\bf before} the bifurcation point see the black point marker)}
							\put(-70,137){\footnotesize  the non-radial solution appears.}
								\put(-119,212){\footnotesize \bf bifurcation point: a non-radial solution goes  }
									\put(-119,202){\footnotesize 
									 \bf continuously to a radial solution  and then becomes  }
								 	\put(-119,192){\footnotesize 
								 	\bf again non-radial as the magnitude $\alpha$ of the load increases.}
	\caption{\footnotesize{ For $M=1$, a zoom commented picture of radial and non-radial solutions, bifurcation.}}\label{bif1}
	
\end{figure}%

However, for one  branch of non-radial solutions (green curve) there is no value of the Biot-stress magnitude for which the cube may continuously switch from the radial solution  to a non-radial solution, while for the  other branch of non-radial solutions (red curve) there is a unique value of the Biot-stress magnitude for which the cube may continuously switch from the radial solution to a non-radial solution, see Figures \ref{film} and \ref{bif1}.

Regarding the strong monotonicity of the non-radial solutions, we remark that while the principal minor 
\begin{align}
m^{\rm Biot}_1(\lambda_1,\lambda_1, \lambda_2)&:=\left(\frac{M}{4}-\frac{1}{6}\right) \left(2 \lambda_1^2 \lambda_2^2+\frac{2}{\lambda_1^2}\right)+\frac{1}{\lambda_1^2}+1>0
\end{align} 
of  $
{\rm D} \widehat{T}(\lambda_1,\lambda_1, \lambda_2)$  is positive (not only on this curve), the second principal minor
\begin{align}
m^{\rm Biot}_2(\lambda_1,\lambda_1, \lambda_2):=-\frac{\left((3 M-2) \lambda_1^4 \lambda_2^2-3 M-6 \lambda_1^2-4\right) \left(3 (3 M-2) \lambda_1^4 \lambda_2^2+3 M+6 \lambda_1^2+4\right)}{36 \lambda_1^4},
\end{align}
is strictly positive only on those points on this curve
for which
\begin{align}
(2-3 M) \lambda_1^4 \lambda_2^2+3 M+6 \lambda_1^2+4>0,
\end{align}
i.e., for $\lambda_1$ satisfying
\begin{align}
-&\frac{6 \left(-\sqrt{\left(9 M^2+6 M-8\right) \lambda_1^2+9 }+(3 M-2) \lambda_1^4-3 \right)}{(3 M-2) \lambda_1^2}>0\\ &\iff\ \ \left(\lambda_1^4>\frac{3}{3M-2} \qquad  \text{and}\qquad -4 - 3 M - 6 \lambda_1^2 +( 3 M-2) \lambda_1^6>0\right).\notag
\end{align}
In fact, each $\lambda_1$ such that $4 - 3 M - 6 \lambda_1^2 +( 3 M-2) \lambda_1^6>0$ satisfies $\lambda_1^4>\frac{3}{3M-2}$, too. Indeed, if  $\lambda_1^4\geq \frac{3}{3M-2}$ then $-4 - 3 M - 6 \lambda_1^2 +( 3 M-2) \lambda_1^6\leq -4 - 3 M - 3 \lambda_1^2 <0$. Therefore, a necessary condition for the strong monotonicity of a non-radial solution is $\lambda_1>\lambda^*$. In Figure \ref{bif1} and Figure \ref{nrm1}, this means the part of the red curve below the $\lambda_1=\lambda_2$ curve and the entire green curve.  The analytic  study of the sign of the third principal minor $m^{\rm Biot}_3(\lambda_1,\lambda_1, \lambda_2)$ of $
{\rm D}\widehat{T}(\lambda_1,\lambda_1, \lambda_2)$ is more complicated. However, the numerical testing has shown that on the entire red curve the strong monotonicity of the principal Biot stresses vector is lost, while on the green curve the monotonicity holds true, see Figure \ref{nrm2}.

Numerical computations show that,  before and after the bifurcation point  the values of the internal energy density $W_{\rm CG}(F)$  as well as the absolute value of the total energy \eqref{totale} are smaller on the radial solutions, in comparison to the non-radial solutions, while this is not true for the total energy \eqref{totale}, even before the bifurcation. Note that the total energy is positive for contraction and negative for extension.

\begin{figure}[t]
	\begin{minipage}{.3\textwidth}
		\centering
		\includegraphics[width=1\linewidth]{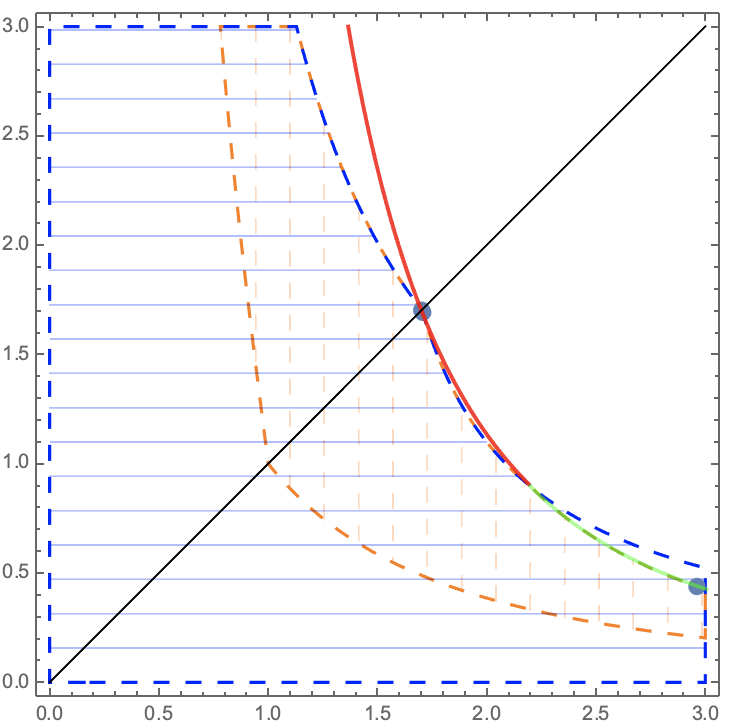}
			\caption{For $M=1$, the strong monotonicity region in the points $(\lambda_1,\lambda_1,\lambda_2)$ (the blue region) versus the energetic stability region in the points $(\lambda_1,\lambda_1,\lambda_2)$ (the orange region) together with the radial solutions (the black curve) and the non-radial solutions (the red curve and the green curve). Only one branch (the green cuve) of non-radial solutions belongs to the strong monotonicity domain. The same branch (the green curve) belongs to the energetic stability domain, too.  }\label{nrm1}
	\end{minipage}\quad 
	\begin{minipage}{.3\textwidth}
		\centering\vspace*{-0.8cm}
		\includegraphics[width=1\linewidth]{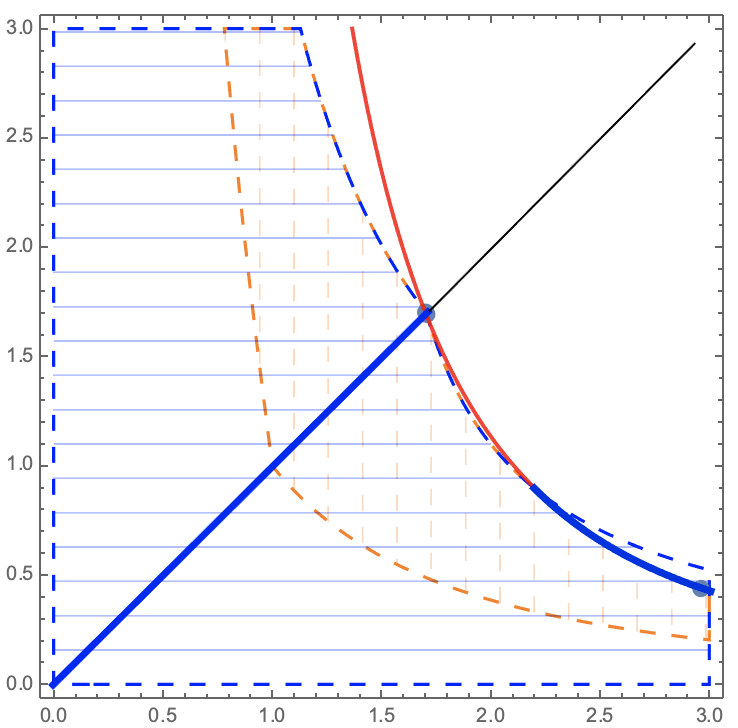}
		\caption{ The strong monotonicity is satisfied on the radial solutions until the bifurcation point is reached, while after the value of $\alpha$ for which bifurcation is present, one branch of the radial solutions preserves the strong monotonicity, while the radial solutions and the other branch (red curve) do not satisfy the strong monotonicity conditions. In this figure, the strong monotonicity is satisfied on the blue curves.}\label{nrm2}
	\end{minipage}\quad 
	\begin{minipage}{.3\textwidth}
		\centering\vspace*{-1.2cm}
		\includegraphics[width=1\linewidth]{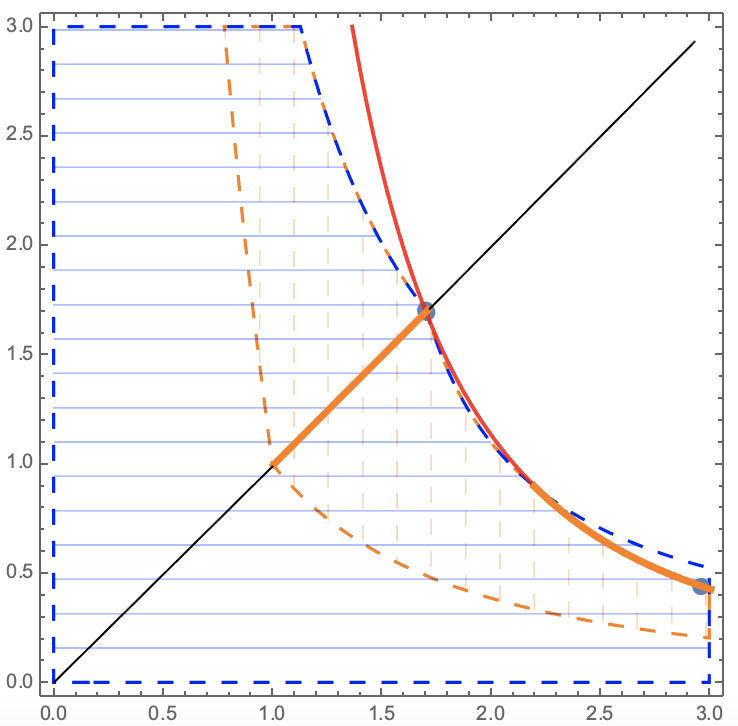}
		\caption{ The energetic stability  is  satisfied on the radial solutions only starting with $\lambda_1=1$ and until  the bifurcation point is attended. However, for non-radial solutions the situation is similar to the strong monotonicity, i.e., the stability is satisfied only on one branch of the radial solutions.  In this figure, the energetic stability is satisfied on the orange curves.}\label{nrm3}
	\end{minipage}

\end{figure}

In the following we discuss the stability of the non-radial solutions, the stability of the radial solutions being already discussed in the previous subsection. 
The stability conditions \eqref{snr} are equivalent to
\begin{align}
\frac{(2-3 M) \lambda_1^4 \lambda_2^2+3 M+6 \lambda_1^2+4}{6 \lambda_1^2}&\geq 0,\qquad\ 
\frac{(2-3 M) \lambda_1^4 \lambda_2^2+3 M+6 \lambda_1 \lambda_2+4}{6 \lambda_1 \lambda_2}\geq 0,\\
\frac{(3 M-2) \lambda_1^4 \lambda_2^2-3 M+6 \lambda_1^2-4}{6 \lambda_1^2}&\geq 0,\qquad 
\frac{1}{6} \left((3 M-2) \lambda_1^3 \lambda_2-\frac{3 M+4}{\lambda_1 \lambda_2}+6\right)\geq 0.\notag
\end{align}
The first two are equivalent to  the positivity of $m_2^{\rm Biot}(\lambda_1,\lambda_1,\lambda_2)$, so it implies $\lambda_1\geq \lambda^*$ while the third implies $\alpha\geq 0$ which is always satisfied since the non-radial solution is present only in extension. For $\lambda_1\geq \lambda^*$ it follows that the fourth inequality is satisfied, too.

The study of the positive semi-definiteness of ${\rm D}^2 g$ is similar  to the  study of the strong monotonicity of $\widehat{T}$ on the non-radial solution, which has already been treated above. 

Summarising, the energetic stability of the non-radial solutions is equivalent to the monotonicity of the map $U\mapsto T_{\rm Biot}(U)$ in these points. Therefore, the radial solutions are stable if and only if $1\geq \lambda_1\geq \lambda^*$, while the non-radial solutions are energetic stable only on the green branch of the non-radial solution. The energetic stable solutions are given in Figure \ref{nrm3} by the orange curve.

\subsection{Unequal principal stretches $(\lambda_i\neq \lambda_j, \ i\neq j$)}
\begin{figure}[h!]
	\centering
	\includegraphics[scale=0.75]{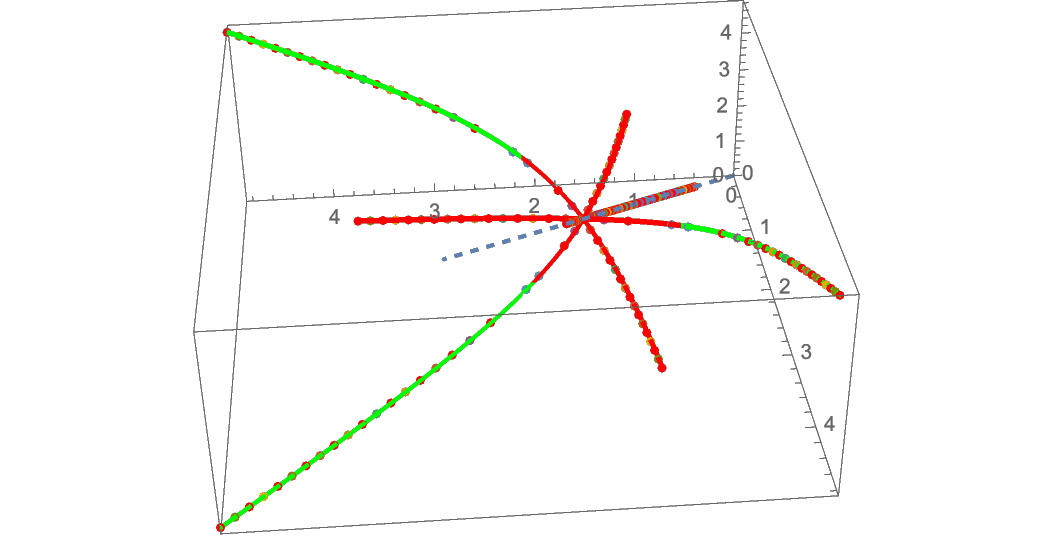}
	\caption{\footnotesize{The numerical simulation for $M=1$ of the solutions $(\lambda_1,\lambda_2,\lambda_3)$ of the equation $T_{\rm Biot}(U)=\alpha\, \id$,\  $\alpha\in[-2,5]$}}
	\label{bif2}
\end{figure}%

Using the expressions \eqref{pbs} of the principal Biot stresses
we find that the general solution $(\lambda_1,\lambda_2,\lambda_3)$ of the equation $T_{\rm Biot}(U)=\alpha\, \id$ is described by the following system
\begin{align}
-\frac{(\lambda_1-\lambda_3) \left((3 M-2) \lambda_1^2 \lambda_2^2 \lambda_3^2-3 M-6 \lambda_1 \lambda_3-4\right)}{6 \lambda_1 \lambda_3}=0,\notag\\-\frac{(\lambda_2-\lambda_3) \left((3 M-2) \lambda_1^2 \lambda_2^2 \lambda_3^2-3 M-6 \lambda_2 \lambda_3-4\right)}{6 \lambda_2 \lambda_3}=0,\\
\frac{1}{6} (3 M-2) \lambda_1^2 \lambda_2^2 \lambda_3-\frac{3 M+4}{6 \lambda_3}+\lambda_3=\alpha.\notag
\end{align}

If $\lambda_1=\lambda_3$ or $\lambda_2=\lambda_3$, then we are in the situation of the previous section.  If $\lambda_1\neq \lambda_2\neq \lambda_3\neq \lambda_1$ then 
\begin{align}
(3 M-2) \lambda_1^2 \lambda_2^2 \lambda_3^2-3 M-6 \lambda_1 \lambda_3-4=0 \ \ \text{and}\ \ \ (3 M-2) \lambda_1^2 \lambda_2^2 \lambda_3^2-3 M-6 \lambda_2 \lambda_3-4=0,
\end{align}
which implies that $\lambda_1=\lambda_2$. So the entire discussion reduces to the situation when two singular values are equal, a conclusion which may be observed  from the numerical simulation given in Figure \ref{bif2},

\section{Conclusion}

In this study, we investigated the invertibility and monotonicity of stress-strain relations, specifically focusing on the Biot stress tensor-right stretch tensor relation and Rivlin's cube problem. Our primary objective was to determine the conditions under which a unique radial solution exists for Neo-Hooke type materials, where the cube remains a cube under any magnitude of radial stress.

We established that the function 
$h\equiv h_{CG}$ 
defining the Ciarlet-Geymonat energies meets the necessary and sufficient properties for ensuring the existence of a unique radial solution. For the Neo-Hooke-Ciarlet-Geymonat model, we identified both radial and non-radial solutions. In the extension case, non-radial solutions arise, transforming the cube into a parallelepiped, while in compression or below a critical force magnitude 
$\alpha^\flat$, such solutions do not exist. Our analysis revealed that radial solutions maintain local monotonicity up to a critical value 
$\alpha^*$, beyond which bifurcation occurs and monotonicity is lost. This critical value corresponds to the point where invertibility is lost in radial solutions, in terms of principal Biot stresses and principal stretches. For force magnitudes starting from 
$\alpha^*\geq\alpha^\flat$
(below the bifurcation threshold), we identified two classes of non-radial solutions, both appearing discontinuously at 
$\alpha^*$
and then depending continuously on the force intensity. One class of non-radial solutions approaches the bifurcation branch, while the other set of non-radial solutions diverges from it. Numerical tests indicated that the first class of non-radial solutions fails to ensure strong monotonicity, whereas the second class maintains monotonicity, aligning better with physical expectations.

These findings provide insights into the behaviour of stress-strain relations in Neo-Hooke materials and contribute to the understanding of material response under various loading conditions.

Having shown the possible discontinuous nature of multiple solutions with and without symmetry for Rivlin's cube problem, it however remains open whether this can be observed in an experimental setup. It is then natural to inquire as to whether the choice of another elastic energy does not exhibit this surprising response. In other words, this would mean that such insufficiencies stem from the restrictions on the class of elastic energies.

\begin{footnotesize}
	
	\section*{Acknowledgement} The work of Ionel-Dumitrel Ghiba was supported by a grant of the Ministry of Research, Innovation and Digitization, CNCS-UEFISCDI, project number PN-IV-P1-PCE-2023-0915, within PNCDI IV.

\bibliographystyle{plain} 

\addcontentsline{toc}{section}{References}

\appendix
\section{General notation}\label{notations}
\textbf{Inner product} \\
For $a,b\in\R^n$ we let $\langle {a},{b}\rangle_{\R^n}$  denote the scalar product on $\R^n$ with associated vector norm $\norm{a}_{\R^n}^2=\langle {a},{a}\rangle_{\R^n}$. We denote by $\R^{n\times n}$ the set of real $n\times n$ second order tensors, written with capital letters. The standard Euclidean scalar product on $\R^{n\times n}$ is given by
$\langle {X},{Y}\rangle_{\R^{n\times n}}=\tr{(X Y^T)}$, where the superscript $^T$ is used to denote transposition. Thus the Frobenius tensor norm is $\norm{X}^2=\langle {X},{X}\rangle_{\R^{n\times n}}$, where we usually omit the subscript $\R^{n\times n}$ in writing the Frobenius tensor norm. The identity tensor on $\R^{n\times n}$ will be denoted by $\id$, so that $\tr{(X)}=\langle {X},{\id}\rangle$. \\
\\
\textbf{Frequently used spaces} 
	\begin{itemize}
	\item $\Sym(n), \rm \Sym^+(n)$ and $\Sym^{++}(n)$ denote the symmetric, positive semi-definite symmetric and positive definite symmetric tensors respectively.
	\item ${\rm GL}(n):=\{X\in\R^{n\times n}\;|\det{X}\neq 0\}$ denotes the general linear group.
	\item ${\rm GL}^+(n):=\{X\in\R^{n\times n}\;|\det{X}>0\}$ is the group of invertible matrices with positive determinant.
	\item ${\rm SL}(n):=\{X\in {\rm GL}(n)\;|\det{X}=1\}$,
	\item $\mathrm{O}(n):=\{X\in {\rm GL}(n)\;|\;X^TX=\id\}$,
	\item ${\rm SO}(n):=\{X\in {\rm GL}(n,\R)\;|\; X^T X=\id,\;\det{X}=1\}$,
	\item $\mathfrak{so}(3):=\{X\in\mathbb{R}^{3\times3}\;|\;X^T=-X\}$ is the Lie-algebra of skew symmetric tensors.
	\item $\mathfrak{sl}(3):=\{X\in\mathbb{R}^{3\times3}\;|\; \tr({X})=0\}$ is the Lie-algebra of traceless tensors.
	\item The set of positive real numbers is denoted by $\R_+:=(0,\infty)$, while $\overline{\R}_+=\R_+\cup \{\infty\}$.
	\end{itemize}
\textbf{Frequently used tensors}
	\begin{itemize}
	\item $C=F^T \, F$ is the right Cauchy-Green strain tensor.
	\item $B=F\, F^T$ is the left Cauchy-Green (or Finger) strain tensor.
	\item $U = \sqrt{F^T \, F} \in \Sym^{++}(3)$ is the right stretch tensor, i.e.~the unique element of ${\rm Sym}^{++}(3)$ with $U^2=C$.
	\item $V = \sqrt{F \, F^T} \in \Sym^{++}(3)$ is the left stretch tensor, i.e.~the unique element of ${\rm Sym}^{++}(3)$ with $V^2=B$.
	\item We also have the polar decomposition $F = R \, U = V R \in {\rm GL}^+(3)$ with an orthogonal matrix $R \in \OO(3)$.
	\end{itemize}
\textbf{Further definitions and conventions}
	\begin{itemize}
	\item For $X\in {\rm GL}(3)$, $\Cof X = (\det X)X^{-T}$ is the cofactor of $X\in {\rm GL}(3)$, while  ${\rm Adj}({X})$ denotes the tensor of transposed cofactors.  
	\item For vectors $\xi,\eta\in\mathbb{R}^3$, we have the tensor product $(\xi\otimes\eta)_{ij}=\xi_i\,\eta_j$. 
	\item For vectors $v=\left(v_1,v_2,v_3\right)^T\ \in\R^3, $ we define
	$
	{\rm diag}\, v=\left(
	\begin{array}{ccc}
	v_1 & 0 & 0 \\
	0 & v_2 & 0 \\
	0 & 0 & v_3 \\
	\end{array}
	\right).
	$
	\item The Fr\'echet derivative of a function $W:\mathbb{R}^{3\times 3}\to \mathbb{R}$ at $F\in \mathbb{R}^{3\times 3}$ applied to the tensor-valued increment $H$ is denoted by ${\rm D}_F[W(F)]. H$. Similarly, ${\rm D}_F^2[W(F)]. (H_1,H_2)$ is the bilinear form induced by the second Fr\'echet derivative of the function $W$ at $F$ applied to $(H_1,H_2)$.
	\item Let $\Omega\subset{\R^3}$  be a  bounded open domain with Lipschitz boundary $\partial\Omega$.   The usual Lebesgue spaces of square-integrable functions, vector or tensor fields on $\Omega$ with values in $\mathbb{R}$, $\mathbb{R}^3$, $\mathbb{R}^{3\times 3}$ or ${\rm SO}(3)$, respectively will be denoted by ${\rm L}^2(\Omega;\mathbb{R})$, ${\rm L}^2(\Omega;\mathbb{R}^3)$, ${\rm L}^2(\Omega; \mathbb{R}^{3\times 3})$ and ${\rm L}^2(\Omega; {\rm SO}(3))$, respectively. 
	\item For vector fields $u=\left(    u_1, u_2,u_3\right)$ with  $u_i\in {\rm H}^{1}(\Omega)$, $i=1,2,3$,
	we define
	$
	{\rm D} \,u:=\left(
	{\rm D}  u_1\,|\,
	{\rm D} u_2\,|\,
	{\rm D} u_3
	\right)^T.
	$
	\end{itemize}
	 
\end{footnotesize}

\end{document}